\title{Higher discrete homotopy groups of graphs}
\author{Bob Lutz}
\address{Mathematical Sciences Research Institute, 17 Gauss Way, Berkeley, CA 94720}
\email{boblutz@berkeley.edu}
\begin{document}
\begin{abstract}
This paper studies a discrete homotopy theory for graphs introduced by Barcelo et al. We prove two main results. First we show that if $G$ is a graph containing no 3- or 4-cycles, then the $n$th discrete homotopy group $A_n(G)$ is trivial for all $n\geq 2$. Second we exhibit for each $n\geq 1$ a natural homomorphism $\psi:A_n(G)\to \hc_n(G)$, where $\hc_n(G)$ is the $n$th discrete cubical singular homology group, and an infinite family of graphs $G$ for which $\hc_n(G)$ is nontrivial and $\psi$ is surjective. It follows that for each $n\geq 1$ there are graphs $G$ for which $A_n(G)$ is nontrivial.
\end{abstract}
\maketitle
\section{Introduction}
\label{sec:intro}

In \cite{barcelo2001} a new homotopy theory for simplicial complexes was introduced, motivated by a search for qualitative invariants in the study of complex systems and their dynamics \cite{barcelo2005}. Given a simplicial complex $K$, an integer $0\leq q\leq \dim K$ and a simplex $\sigma_0\in K$ of dimension at least $q$, one defines a family of groups
\begin{equation}
A_n^q(K,\sigma_0),\quad n\geq 1,
\label{eq:introgrp}
\end{equation}
called the \emph{discrete homotopy groups} of $K$. In contrast to classical homotopy theory, the groups \eqref{eq:introgrp} are defined combinatorially.


For $n=1$, the group \eqref{eq:introgrp} is called the \emph{discrete fundamental group}, and is well understood. There is a discrete analog of the Seifert-van Kampen theorem, and in fact one can construct a cell complex $X$ such that
\begin{equation}
A_1^q(K,\sigma_0)\cong \pi_1(X,x_0),
\label{eq:compcomp}
\end{equation}
where $\pi_1(X,x_0)$ is the classical fundamental group for some $x_0\in X$. Results like these have enabled computations of $A_1^q(K,\sigma_0)$ for interesting simplicial complexes, including Coxeter complexes of finite Coxeter groups \cite{barcelo2011, barcelo2008}.

For $n>1$, the situation is less clear. The basic tools for computing classical higher homotopy groups, already a difficult problem in general, have no known discrete analogs. In \cite{babson2006} a higher-dimensional version of \eqref{eq:compcomp} was obtained, assuming a ``plausible'' cubical analog of the simplicial approximation theorem. Despite its plausibility, however, the proposed approximation theorem has resisted all attempts at a proof.

This paper studies the problem of computing higher discrete homotopy groups. To begin, we  reduce this problem to considering only graphs instead of general simplicial complexes. Given a graph $G$ and a vertex $v_0\in G$, one can define a family of groups $A_n(G,v_0)$ similarly to \eqref{eq:introgrp}. For any triple $(K,q,\sigma_0)$ as above, there is a pair $(G,v_0)$ such that
\[A_n^q(K,\sigma_0)\cong A_n(G,v_0).\]
Namely, $G$ is the graph whose vertices correspond to the maximal simplices of $K$ of dimension at least $q$, and whose edges correspond to the pairs of such simplices sharing a $q$-face. We can therefore restrict our attention to the groups $A_n(G,v_0)$ without loss of generality. In particular, we will consider only connected graphs, so we can ignore the base vertex and simply write $A_n(G)=A_n(G,v_0)$.

We prove two main results. The first is motivated by a singular homology theory for graphs, introduced in \cite{barcelo2014} as a companion to the discrete homotopy theory described above. The relevant groups $\hc_n(G)$, called the \emph{discrete singular cubical homology groups} of $G$, are defined combinatorially. It can be shown that if $G$ contains no 3- or 4-cycles, then $\hc_n(G)$ is trivial for all $n\geq 2$. This was the main result of \cite{barcelo2019}. We prove a similar theorem for discrete homotopy.

\begin{thm}
If $G$ contains no 3- or 4-cycles, then $A_n(G)$ is trivial for all $n\geq 2$.
\label{thm:mainthm1}
\end{thm}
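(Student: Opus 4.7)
My plan is to pass to the universal cover of $G$. Since the only defining relations for $A_1(G)$ beyond those already present in the classical fundamental group are the boundaries of 3- and 4-cycles, the hypothesis forces $A_1(G)$ to coincide with the topological fundamental group of $G$ viewed as a 1-dimensional CW complex. The topological universal covering map $\pi\colon\tilde G\to G$ therefore plays the role of a ``discrete universal cover'': $\tilde G$ is a tree, and $\pi$ is a graph homomorphism.

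Given a based cubical $n$-loop $f\colon I_{m_1}\times\cdots\times I_{m_n}\to G$ representing an element of $A_n(G)$ with $n\geq 2$, the first step is to lift $f$ to a cubical map $\tilde f\colon I_{m_1}\times\cdots\times I_{m_n}\to\tilde G$. Any grid is A-contractible (it may be shrunk coordinate by coordinate), so the standard covering-space lifting criterion, transferred to the discrete setting, produces a unique lift of $f$ once a lift of the basepoint $v_0$ is chosen; the boundary condition on $f$ forces $\tilde f$ to send the entire boundary of the grid to this distinguished lift $\tilde v_0$. Because $\tilde G$ is a tree, it is A-contractible: the synchronized retraction that moves each vertex along the unique path in $\tilde G$ toward $\tilde v_0$ is a cubical A-homotopy, since images of adjacent grid vertices stay at distance at most $1$ throughout. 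Applying this retraction to $\tilde f$ gives an A-homotopy from $\tilde f$ to the constant loop at $\tilde v_0$, and composing with $\pi$ yields an A-homotopy from $f$ to the constant loop at $v_0$.

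The main obstacle is justifying the lifting step: one must prove a covering-space lifting theorem in the discrete setting, namely that cubical maps from A-simply connected cubical domains lift uniquely along topological coverings of $G$, and that A-homotopies of such maps lift as well. The hypothesis that $G$ has no 3- or 4-cycles is precisely what makes the relevant obstruction vanish, since these are the cycle types that could otherwise cause $A_1(G)$ to differ from $\pi_1(G)$; with them absent the two theories agree and the classical lifting argument can be adapted. If one prefers a self-contained alternative, the same conclusion can be reached by a direct combinatorial attack: a case analysis on a single elementary 2-face shows that its image in $G$ can contain neither a 4-cycle nor a triangle, hence consists of at most three vertices spanning a path of length $\leq 2$, and this strong local degeneracy can be assembled into a global A-homotopy that retracts $f$ onto a spanning tree of its image before contracting further. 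Either way, the real work lies in converting the local rigidity forced by the short-cycle hypothesis into a coherent global deformation.
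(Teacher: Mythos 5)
Your strategy --- lift $f$ to a tree sitting over $G$, contract it there, and push the contraction back down --- is essentially the strategy of the paper, but the proposal leaves unproven exactly the step that carries all the content of the theorem: the existence (equivalently, the path-independence) of the lift. You acknowledge this yourself (``the main obstacle is justifying the lifting step''), and the assertion that the absence of 3- and 4-cycles ``is precisely what makes the relevant obstruction vanish'' is the conclusion to be proven, not an argument for it. Concretely, to lift $f$ along a covering $\tilde G\to G$ one must show that every closed walk in the 1-skeleton of the grid is sent by $f$ to a closed walk that is nullhomotopic in $G$; since the fundamental group of the grid's 1-skeleton is normally generated by the unit squares, this reduces to showing that the $f$-image of each unit square is a nullhomotopic closed walk of length 4, which is where the hypothesis enters (a graph map can collapse edges, so one must also handle the degenerate images). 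None of this is carried out, and the ``self-contained alternative'' you sketch --- the case analysis on a single 2-face --- is precisely the local lemma needed, but it is stated without the cases and without the mechanism for assembling the local statements into a global well-definedness claim.

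The paper fills this gap by constructing the lift by hand rather than invoking covering-space theory. It assigns to every path $P$ in $\Z^n$ a word $\tau(P)$ in the free group $F_E$ on the directed edges of $G$, sets $g(x)=\tau(P)$ for any $P$ running from $\znger$ to $x$, and proves well-definedness by showing that $\tau$ is invariant under two elementary moves (corner swaps across unit squares and backtrack deletions) which suffice to trivialize any closed path in $\Z^n$. The corner-swap invariance is exactly the three-case analysis of the image of a unit square that you allude to (Figure \ref{fig:sqtravel}). The image of $g$ is then a finite tree inside the Cayley graph of $F_E$, contractible by Example \ref{eg:acyclic}, and the projection $\pi$ with $\pi\circ g=f$ converts the resulting based nullhomotopy of $g$ into one of $f$. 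Two smaller points to repair in your version: the universal cover $\tilde G$ is an infinite tree whenever $G$ contains a cycle, and infinite graphs are not contractible under the paper's definition, so you must contract only the finite image of the lift; and you should check that the chosen tree contraction fixes the lifted basepoint, so that the induced homotopy of $\tilde f$ is a \emph{based} homotopy --- for a finite tree contracted toward its root this is automatic, but it is the reason the general contractibility argument (Proposition \ref{prop:contriv}) needs its two-stage construction.
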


If $G$ contains no 3- or 4-cycles, then it can be shown that $A_1(G)\cong \pi_1(G)$, where we regard $G$ as both a graph and a topological space. Thus Theorem \ref{thm:mainthm1} gives a complete picture of the discrete homotopy of such graphs.

Our second result makes progress toward a discrete Hurewicz theorem. Recall that the classical Hurewicz theorem \cite{hurewicz1935} describes, for any path-connected topological space $X$, a map from $\pi_n(X)$ to the $n$th homology group of $X$. If $n=1$, then this map is surjective, and its kernel is the commutator subgroup $[\pi_1(X),\pi_1(X)]$. If $n>1$ and $\pi_k(X)$ is trivial for all $k<n$, then the map is an isomorphism.

In \cite{barcelo2014} it was shown that there is a surjective map
\begin{equation}
A_1(G)\to \hc_1(G)
\label{eq:dischur}
\end{equation}
with kernel $[A_1(G),A_1(G)]$, giving a discrete Hurewicz theorem in dimension 1. We generalize the map \eqref{eq:dischur} to higher dimensions and show that it is surjective in a large number of cases.

\begin{thm}
For any graph $G$ and positive integer $n$, there is a natural homomorphism
\begin{equation}
\psi: A_n(G)\to \hc_n(G).
\label{eq:intropsi}
\end{equation}
For each $n\geq 1$, there is an infinite family of graphs $G$ for which $\psi$ is surjective and $\hc_n(G)$ is nontrivial.
\label{thm:mainthm2}
\end{thm}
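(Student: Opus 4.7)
The plan is to handle the two claims of Theorem \ref{thm:mainthm2} in sequence: first construct $\psi$ and verify its properties, then build the infinite family.

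For the construction of $\psi$, both $A_n(G)$ and $\hc_n(G)$ are built from cubical data on $G$: an element of $A_n(G)$ is represented by a based cubical map $f:I^n\to G$ (with appropriate boundary/support conditions), and the same $f$ determines a generator of the cubical $n$-chains of $G$. I would define $\psi$ at the level of representatives by sending $[f]$ to the homology class $[f]$, and then verify well-definedness by checking that each elementary discrete homotopy move replaces $f$ by a chain differing from it by a cubical boundary plus degenerate terms. Concretely, an elementary homotopy is a map $F:I^n\times I\to G$; regarded as a singular $(n+1)$-cube (with the extra coordinate appended), its boundary computes exactly the difference between the two endpoints, with all side faces either degenerate or cancelling in pairs due to the constant boundary condition on $\partial I^n$. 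The homomorphism property reduces to showing that the concatenation $f\cdot g$ used to define the group operation on $A_n(G)$ satisfies $[f\cdot g]=[f]+[g]$ in $\hc_n(G)$; this is a standard prism-style argument, producing an explicit $(n+1)$-cube whose boundary realizes $f\cdot g - f - g$ modulo degenerate cubes. Naturality is automatic.

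For the surjectivity/nontriviality claim, the strategy is to exhibit, for each $n\geq 1$, a family $\{G_k\}_{k\geq k_0}$ together with an explicit representative $f_k:I^n\to G_k$ of a class in $A_n(G_k)$ such that (i) $\psi[f_k]$ generates $\hc_n(G_k)$, and (ii) $\hc_n(G_k)\neq 0$. For $n=1$ the cycles $C_k$ with $k\geq 5$ provide a model, and the obvious loop represents a class whose image in $\hc_1$ is nontrivial. For general $n$ I would attempt an inductive construction, such as a suspension or (modified) graph product that sends an $n$-dimensional example to an $(n+1)$-dimensional one while preserving the presence of an explicit spheroidal cubical map and an explicit nontrivial cocycle. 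The nontriviality of $\hc_n(G_k)$ would be certified by producing a cochain $\phi$ (e.g.\ a combinatorial analog of a volume form) that pairs to a nonzero value with $\psi[f_k]$ and vanishes on all cubical boundaries.

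The main obstacle will be the second half. It is not enough to find graphs with $\hc_n(G)\neq 0$; one needs that nontrivial classes are hit by $\psi$, which requires the chosen homology generator to come from an actual based cubical map $I^n\to G$ whose boundary collapses to the basepoint. Ensuring this geometric liftability simultaneously with the existence of a separating cocycle, and doing so in an infinite family for every $n$, is the delicate combinatorial point. I expect that the cleanest route is a product/suspension construction that produces both the spheroidal map and the dual cocycle inductively from the $n=1$ case, so that surjectivity of $\psi$ is automatic by construction and nontriviality is verified by an explicit pairing computation.
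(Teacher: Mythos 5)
Your construction of $\psi$ follows the paper's route in spirit, but as written it is not quite well posed: in the discrete cubical theory a singular $n$-cube is a graph map out of $Q_n=\{0,1\}^n$, so a representative $f:I_m^n\to G$ of a class in $A_n(G)$ is not itself an $n$-chain. You must first subdivide, sending $[f]$ to the class of $\phi(f)=\sum_x f^x$, the sum of all unit $n$-cubes of $f$; the telescoping and degeneracy arguments you describe (boundary of a homotopy, prism for concatenation) then go through essentially as in the paper, again only after subdividing the homotopy into unit $(n+1)$-cubes rather than treating it as a single one. This is a repairable imprecision rather than a wrong idea, and the first half of your plan matches the paper's.

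The genuine gap is in the second half. You correctly guess the suspension construction ($G_{n+1}=S_{n+3}G_n$ starting from a long cycle) and correctly identify the crux, namely that the homology generator must be realized by a \emph{based} map $(\Z^n,\partial\Z^n)\to(G_n,v_0)$, but you leave exactly that step as an expectation. The natural cycle generating $\hc_n(G_n)$ is carried by a grid map $\gamma_n:J_n\to G_n$ that is not constant on its boundary, so it represents no element of $A_n(G_n)$. The paper's resolution is the real work: it embeds $J_n$ in a larger grid $L_n$, builds a background map $f_n$ whose image lies in a contractible subgraph of iterated poles (so $[f_n]=0$), and overwrites $f_n$ by $\gamma_n$ on the subgrid to get a based map $g_n$ with $\phi(g_n)-\phi(f_n)=\phi(\gamma_n)$ modulo degenerate cubes; checking that $f_n$ and $g_n$ are graph maps agreeing on the boundary of the subgrid requires specific flattening identities of $\gamma_n$ at the poles. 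Your alternative certificate of nontriviality (a separating cocycle) is likewise not developed; the paper instead runs a Mayer--Vietoris argument, using contractibility of the two cones, to identify $\hc_n(G_n)\cong\hc_1(G_1)$ and to track the generator through the connecting map. Without the explicit based spheroid, surjectivity is asserted rather than proved.
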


An important consequence of Theorem \ref{thm:mainthm2} is the existence of graphs $G$ with nontrivial groups $A_n(G)$ for $n\geq 2$. To date, no examples of such graphs have appeared in the literature.

\begin{cor}
For each $n\geq 1$ there are graphs $G$ for which $A_n(G)$ is nontrivial.
\end{cor}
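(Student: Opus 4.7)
The plan is to deduce the corollary immediately from Theorem \ref{thm:mainthm2} by a one-line contrapositive argument. Fix $n\geq 1$ and invoke the second part of Theorem \ref{thm:mainthm2} to produce a graph $G$ for which the natural homomorphism $\psi:A_n(G)\to \hc_n(G)$ is surjective and $\hc_n(G)$ is nontrivial. Since the image of a group under a homomorphism contains the identity, a trivial domain would force a trivial image; combined with surjectivity this would force $\hc_n(G)$ to be trivial, contradicting the choice of $G$. Hence $A_n(G)$ must be nontrivial for this $G$.

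The only genuine content here lies in Theorem \ref{thm:mainthm2}, which has already been established (or will be, elsewhere in the paper); the corollary is a purely formal consequence. In particular, no new construction, no additional hypothesis on $G$, and no analysis of the homomorphism $\psi$ beyond its existence and surjectivity on the specified family are required. There is no obstacle to overcome: the argument is a two-sentence deduction and could reasonably be folded into a remark after Theorem \ref{thm:mainthm2} rather than stated separately, but stating it as a corollary usefully highlights what is, historically, the first family of examples exhibiting nontrivial higher discrete homotopy groups.
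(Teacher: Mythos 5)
Your argument is correct and is exactly the deduction the paper intends: Theorem \ref{thm:mainthm2} supplies a graph $G$ with $\psi:A_n(G)\to \hc_n(G)$ surjective and $\hc_n(G)$ nontrivial, so $A_n(G)$ cannot be trivial. Nothing further is needed.
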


The paper is organized as follows. In Section \ref{sec:basic} we give preliminary definitions and results concerning discrete homotopy theory. In Section \ref{sec:tsfree} we prove Theorem \ref{thm:mainthm1}. In Section \ref{sec:conesus} we describe the basic constructions needed for Theorem \ref{thm:mainthm2}. In Section \ref{sec:hurewicz} we recall the definition of discrete cubical singular homology groups and prove Theorem \ref{thm:mainthm2}. In Section \ref{sec:openq} we suggest directions for future work.
\section{Basic definitions}
\label{sec:basic}

\subsection{Discrete homotopy}

In this paper, the term \emph{homotopy} refers to the discrete homotopy theory for graphs introduced in \cite{barcelo2001}. By a \emph{graph} we will mean one that is simple, connected and undirected. We write $v\in \g$ if $v$ is a vertex of a graph $\g$. If $\g_1$ and $\g_2$ are graphs, then we write $f:\g_1\to \g_2$ when $f$ is a function from the vertex set of $\g_1$ to the vertex set of $\g_2$. If two vertices $u,v\in G$ are adjacent or equal, then we write $u\simeq v$.

\begin{mydef}
A function $f:\g_1\to \g_2$ is a \emph{graph map} if $f(u)\simeq f(v)$ whenever $u\simeq v$.
\end{mydef}

Let $\Z$ denote the graph with vertex set $\{\ldots,-1,0,1,\ldots\}$ and an edge $ij$ if and only if $|i-j|=1$. Given an integer $m\geq 0$, we write $I_m$ for the subgraph of $\Z$ induced by $\{0,\ldots,m\}$. We let $G_1\times G_2$ denote the Cartesian product of graphs $G_1$ and $G_2$. The following notion of homotopy defines an equivalence relation on graph maps $G_1\to G_2$.

\begin{mydef}
Two graph maps $f,g:\g_1\to \g_2$ are \emph{homotopic} if for some $m$ there is a graph map $h:\g_1\times I_m\to \g_2$ such that $h(-,0)=f$ and $h(-,m)=g$. The map $h$ is a \emph{homotopy} from $f$ to $g$. We write $h_i = h(-,i)$ for all $i$, so $h_0=f$ and $h_m=g$.
\end{mydef}

Let $\Z^n$ denote the $n$-fold Cartesian product of $\Z$, whose vertices are $n$-tuples of integers. For any $r\geq 0$, let $\znger\subseteq \Z^n$ be the subgraph induced by all vertices $x\in \Z^n$ with $|x_i|\geq r$ for some $i$.

Given a function $f:G_1\to G_2$, and subgraphs $H_1\subseteq G_1$ and $H_2\subseteq G_2$, we write
\begin{equation}
f:(\g_1,H_1)\to (\g_2,H_2)
\end{equation}
if $v\in H_1$ implies that $f(v)\in H_2$. We will abuse this notation by writing
\begin{equation}
f:(\Z^n,\partial \Z^n)\to (\g,v_0)
\end{equation}
if there exists an $r\geq 0$ such that $f$ is a function $(\Z^n,\znger)\to (G,v_0)$. The minimum such $r$ is called the \emph{radius} of $f$. We emphasize that $\partial \Z^n$ is not defined by itself, but is shorthand for some $\znger$. The following notion of homotopy, more restrictive than the last, defines an equivalence relation on graph maps $(\Z^n,\partial \Z^n)\to (\g,v_0)$.

\begin{mydef}
Two graph maps $f,g:(\Z^n,\partial \Z^n)\to (\g,v_0)$, are \emph{based homotopic} if there is a homotopy $h:\Z^n\times I_m \to \g$ from $f$ to $g$ such that $h_i$ is a graph map $(\Z^n,\partial \Z^n)\to (G,v_0)$ for all $i$. The function $h$ is a \emph{based homotopy} from $f$ to $g$.
\end{mydef}

\begin{mydef}
Fix $v_0\in\g$. Let $A_n(\g)$ denote the set of based homotopy classes $[f]$ of graph maps $f:(\Z^n,\partial \Z^n)\to (\g,v_0)$. We endow $A_n(\g)$ with a group structure as follows. If $f$ and $g$ are graph maps $(\Z^n,\partial \Z^n)\to (\g,v_0)$ of radii $r_f$ and $r_g$, respectively, then the product $[f]\cdot [g]$ in $A_n(\g)$ is the based homotopy class of the map $p:(\Z^n,\partial \Z^n)\to (\g,v_0)$ given by
\begin{equation}
  p(x_1,\ldots,x_n) =
  \begin{cases}
    f(x_1,\ldots,x_n)&\mbox{if } x_1\leq r_f\\
    g(x_1-(r_f+r_g),x_2,\ldots,x_n)&\mbox{if } x_1>r_f.
  \end{cases}
\label{eq:concat}
\end{equation}
The identity of this operation is the based homotopy class of the constant map $\Z^n\to v_0$. We call $A_n(\g)$ the \emph{$n$th discrete homotopy group} of $\g$.
\end{mydef}

\begin{figure}[ht]
\begin{tikzpicture}[scale=3]
\draw[thick,fill=black!15] (-0.3,-0.3) rectangle (0.3,0.3);
\draw[thick,fill=black!06] (0.3,-0.4) rectangle (1.1,0.4);
\node () at (0,0) {$f$};
\node () at (0.7,0) {$g$};
\end{tikzpicture}
\caption{Illustrating the group operation in $A_2(G)$.}
\end{figure}
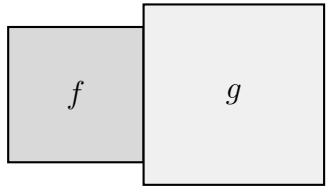

\begin{remark}
In \cite{barcelo2001}, working in the more general setting of simplicial complexes, it is shown that the operation in $A_n(G)$ is well defined, that it satisfies the group axioms, and that the definition of $A_n(G)$ is independent of the base vertex $v_0$ when $G$ is connected. It is also shown that the groups $A_n(G)$ are abelian for all $n\geq 2$. We will not prove these facts here.
\end{remark}

\begin{prop}[{\cite[Proposition 5.12]{barcelo2001}}]
Let $G$ be a graph, and let $X_G$ be the cell complex obtained by regarding $G$ as a 1-complex and attaching a 2-cell along the boundary of each 3- and 4-cycle. We have $A_1(G)\cong \pi_1(X_G)$.
\label{prop:cwcomp}
\end{prop}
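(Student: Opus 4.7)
The plan is to build an isomorphism $\Phi: A_1(G) \to \pi_1(X_G)$ by identifying each graph map $f: (\Z, \partial \Z) \to (G, v_0)$ with the closed edge walk in $G$ that it traces, and then checking that elementary moves match on both sides. If $f$ has radius $r$, then $f(i) = v_0$ for all $|i| \geq r$, so $f$ is determined by the sequence $v_0 = f(-r), f(-r+1), \ldots, f(r) = v_0$ in which consecutive entries are equal or adjacent. Collapsing the stalls (indices where $f(i) = f(i+1)$) yields an edge loop in the 1-skeleton of $X_G$ based at $v_0$, and I would define $\Phi[f]$ to be its homotopy class in $X_G$.

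To show $\Phi$ is well defined, I would observe that an elementary based homotopy $h: \Z \times I_1 \to G$ from $f$ to $g$ produces, for each consecutive pair $i, i+1$, a square whose corners $f(i), f(i+1), g(i+1), g(i)$ are pairwise equal or adjacent along the four sides. Up to collapse of degenerate sides, such a square is either a vertex, an edge, a 3-cycle, or a 4-cycle, so by construction its boundary bounds a 2-cell of $X_G$. Concatenating these cells across $i \in \Z$ (which is a finite operation since $h$ is eventually constant at $v_0$) gives a homotopy of edge loops in $X_G$. That $\Phi$ is a homomorphism follows immediately from the concatenation formula \eqref{eq:concat}, and $\Phi$ is surjective by cellular approximation in $X_G$: every loop at $v_0$ is homotopic to one in the 1-skeleton, i.e., to an edge walk of the form $\Phi[f]$.

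The main obstacle is injectivity. Here I would use the standard edge-path presentation of $\pi_1(X_G)$: its elements are closed edge walks at $v_0$ modulo insertion and deletion of backtracks $v \to w \to v$, and modulo replacement of a boundary portion of a filled 3- or 4-cycle. It then suffices to realize each of these moves as a based homotopy of graph maps. A backtrack insertion corresponds to a one-step homotopy that promotes a stalled index to a neighbor $w$ and immediately returns; a 3-cycle move $u \to v \to w \leftrightarrow u \to w$ and a 4-cycle move $u \to v \to w \leftrightarrow u \to x \to w$ are each realized by a one-step homotopy that fills the corresponding square in $\Z \times I_1$, leaving all other indices and the boundary values at $v_0$ unchanged.

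The remaining work, which I expect to be the bulk of the argument, is to show that an arbitrary sequence of such edge-path moves between closed walks lifts coherently to a single based homotopy on all of $\Z$, with radii and stalls managed so that the intermediate maps $h_i$ all have finite support and fix $\partial \Z$. This is essentially a combinatorial bookkeeping exercise: one introduces enough padding by stalls so that each local move fits into a one-step homotopy of appropriate radius, and one chains these one-step homotopies in sequence. Once this lifting is established, $\Phi$ is an isomorphism.
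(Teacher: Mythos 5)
The paper does not actually prove this proposition; it is imported from \cite{barcelo2001} as a black box, so your proposal has to stand on its own. Your overall strategy is the standard and correct one: send $[f]$ to the edge loop traced by $f$ after collapsing stalls, check invariance under one-step homotopies by noting that each elementary square of $\Z\times I_1$ maps to a point, a backtrack, a 3-cycle or a 4-cycle (all of which bound in $X_G$), get surjectivity from cellular approximation, and get injectivity by lifting the edge-path relations of $\pi_1(X_G)$ back to based homotopies.

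There is, however, one concrete gap in the injectivity step. You claim the 4-cycle relation $u\to v\to w \leftrightarrow u\to x\to w$ is realized by a one-step homotopy that changes only the single index carrying $v$ to the value $x$. For $h:\Z\times I_1\to G$ to be a graph map you need the vertical condition $h(i,0)\simeq h(i,1)$, i.e.\ $v\simeq x$; but $v$ and $x$ are \emph{opposite} corners of the 4-cycle and need not be adjacent (for an induced 4-cycle they are not). So that one-step homotopy is not a graph map in general --- this is precisely the point where 4-cycles behave differently from 3-cycles in this theory. The fix is easy: first insert a stall to get $(\ldots,u,v,w,w,\ldots)$, then perform the one-step homotopy to $(\ldots,u,u,x,w,\ldots)$. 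This is legal because the vertical moves are $v\to u$ and $w\to x$, both edges of the 4-cycle, and the new row $u,u,x,w$ is still a walk using the edges $xu$ and $wx$; collapsing the stall at $u$ yields $u\to x\to w$. With this correction (and the stall/padding bookkeeping you already acknowledge, which is handled by the same shifting trick the paper uses in Section 5 to reparametrize maps), the argument goes through.
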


\begin{eg}
Let $G$ be a cycle graph of length $m$, i.e. $G=\Z_m$. If $m=3$ or 4, then $X_G$ is simply connected, so $A_1(G)$ is trivial by Proposition \ref{prop:cwcomp}. If $m\geq 5$, then $X_G\approx S^1$, so $A_1(G)\cong \Z$. Label the vertices of $G$ in a cycle as $v_0,\ldots,v_{m-1}$. An explicit generator of $A_1(G)$ is $[f]$, where $f$ is given by $f(i)=v_i$ if $0\leq i<m$ and $f(i)=v_0$ otherwise.

More generally, suppose that $G$ contains no 3- or 4-cycles. For each cycle $Z$ of $G$ we obtain a generator $[f_Z]$ of $A_1(Z)$ as above. A set of generators of $A_1(G)$, although not necessarily a minimal one, is the set of $[f_Z]$ for all $Z$.
\label{eg:babyfg}
\end{eg}

\begin{eg}
Let $K$ be a finite simplicial 2-complex. Suppose, for any vertices $i$, $j$ and $k$, that if the edges $ij$, $ik$ and $jk$ all belong to $K$, then so does the triangle $ijk$. Suppose also that the 1-skeleton $K^1$ is a \emph{chordal} graph, i.e. that if a cycle of $G$ is an induced subgraph, then it contains exactly 3 vertices. These conditions ensure that if $G=K^1$, then $X_G=K$ in the notation of Proposition \ref{prop:cwcomp}, so $A_1(G)\cong \pi_1(K)$. For example, if $K$ is the triangulation of the real projective plane in Figure \ref{fig:pp}, then $A_1(G)\cong \Z/2\Z$. This example generalizes the remark following \cite[Proposition 5.12]{barcelo2001}.

\begin{figure}[ht]
\begin{tikzpicture}[scale=0.75]
\draw[fill=black!10] (0,0) rectangle (5,5);
\foreach\x in {0,...,5} {
\draw (\x,0) -- (0,\x);
\draw (5-\x,5) -- (5,5-\x);
\draw (0,\x) -- (5,\x);
\draw (\x,0) -- (\x,5);
}
\draw[ultra thick,magenta] (0,0) -- (0,1);
\draw[ultra thick,magenta] (5,5) -- (5,4);
\draw[ultra thick,cyan] (0,5) -- (1,5);
\draw[ultra thick,cyan] (5,0) -- (4,0);
\draw[ultra thick,magenta,->] (0,1) -- (0,5.05);
\draw[ultra thick,magenta,->] (5,4) -- (5,-0.05);
\draw[ultra thick,cyan,->] (1,5) -- (5.05,5);
\draw[ultra thick,cyan,->] (4,0) -- (-0.05,0);
\end{tikzpicture}
\caption{A triangulation of the projective plane.}
\label{fig:pp}
\end{figure}
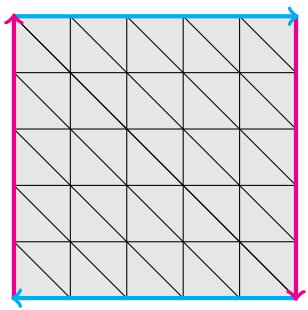
\end{eg}

\subsection{Contractibility}

We define a notion of a contractible graph as a discrete analog of a contractible topological space. We show that all discrete homotopy groups of a contractible graph are trivial. This fact will be used in proving the main results.

\begin{mydef}
A \emph{contraction} of $\g$ is a homotopy from the identity map $\g\to \g$ to a constant map. A graph $\g$ is \emph{contractible} if it admits a contraction.
\end{mydef}

By our definition, infinite graphs are not contractible. We caution that this notion of contractibility does not imply that $\pi_1(\g)$ is trivial when $\g$ is regarded as a topological space. For instance, the next example shows that the square graph $\Z_4$ is contractible, but $\pi_1(\Z_4)\cong\Z$.

\begin{eg}[Grid graphs]
A \emph{grid graph} is any subgraph $L$ of $\Z^n$ induced by a vertex set of the form
\[\{a_1,\ldots,b_1\}\times \cdots \times \{a_n,\ldots,b_n\}\]
for some $a_i,b_i\in \Z$ with $a_i\leq b_i$ for all $i$. The \emph{boundary} of $L$ is the subgraph $\partial L$ induced by all vertices $x\in L$ with $x_i\in \{a_i,b_i\}$ for some $i$. Its complement $L^\circ=L\setminus \partial L$ is the \emph{interior} of $L$. Let $m=\max_i b_i-a_i$, and let $c:L\times I_m\to L$ be given by
\[c(v,i) = (\max\{v_1-i,0\},\ldots,\max\{v_n-i,0\})\]
for all $(v,i)\in L\times I_m$. It is routine to check that $c$ is a homotopy. In particular, $c_0$ is the identity map $L\to L$, and $c_m$ is the constant map $L\to (0,\ldots,0)$. Hence $L$ is contractible.
\label{eg:gridcon}
\end{eg}

\begin{eg}[Finite trees]
Suppose that $T$ is a finite tree, and let $v_0\in T$. For every $v\in T\setminus v_0$, let $\rho(v)$ denote the unique neighbor of $v$ that lies on the simple path from $v_0$ to $v$. Set $\rho(v_0)=v_0$. This defines a graph map $\rho:\g\to \g$. Let $m$ be the diameter of $\g$. We define a function $c:\g\times I_m\to \g$ by setting
\[c(v,i) = \begin{cases}v&\mbox{if }i=0 \\ \rho(c(v,i-1))&\mbox{if }1\leq i\leq m\end{cases}\]
for all $v\in T$. It is routine to check that $c$ is a homotopy. In particular, $c_0$ is the identity map $T\to T$ and $c_m=v_0$, so $G$ is contractible.

\label{eg:acyclic}
\end{eg}

\begin{prop}
If $\g$ is contractible, then $A_n(\g)$ is trivial for all $n$.

\begin{proof}
Let $c:G\times I_m\to G$ be a contraction of $G$ to a vertex $v_0$, and let $f:(\Z^n,\partial \Z^n)\to (G,v_0)$ be a graph map of radius $r$. If $c(v_0,-)$ is constant, then there is an obvious based homotopy $\Z^n\times I_m \to G$ from $f$ to the constant map $v_0$, given by
\begin{equation}
(x,i)\mapsto c(f(x),i)
\label{eq:easyhom}
\end{equation}
for all $(x,i)\in \Z^n\times I_m$. However, it need not be the case that $c(v_0,-)$ is constant. In general, the map \eqref{eq:easyhom} is a homotopy but not a based homotopy, and hence does not preserve the equivalence class $[f]\in A_n(G)$.

To get around this, we define a based homotopy $h:\Z^n\times I_{2m}\to G$ from $f$ to $v_0$ in two steps. We first define $h_i$ for $i=0,\ldots,m$, and then for $i=m+1,\ldots,2m$. Let $\znler\subseteq \Z^n$ denote the grid graph
\[\znler = \{-r,\ldots,r\}\times \cdots \times \{-r,\ldots,r\}.\]
Let $B_0=\partial \znler$. For $k\geq 1$, let $B_k$ denote the boundary of the grid graph whose interior is $\znler\cup B_0\cup \cdots \cup B_{k-1}$. Note that $\{\znler,B_1,B_2,\ldots\}$ partitions $\Z^n$. Let $h_0=f$. For $i=1,\ldots,m$ let $h_i(x)=c(f(x),i)$ for all $x\in \znler$, and let $h_i$ be constant on $B_k$ for each $k\geq 1$, taking the same value that $h_{i-1}$ takes on $B_{k-1}$. The result is that $h_m$ is constant on $\znler$ and each $B_k$. 

\begin{figure}[ht]
\centering
\begin{tikzpicture}[scale=2.1]
\def\a{0.22}
\def\b{0.5}
\coordinate (a) at (0,0);
\draw[fill=cyan] ($(a)+(\b/4,2*\a)$) circle (1pt);
\draw[fill=cyan] ($(a)+(\b,2*\a)$) circle (1pt);
\draw[fill=cyan] ($(a)+(5*\b,2*\a)$) circle (1pt);
\draw[fill=cyan] ($(a)+(5*\b,\a)$) circle (1pt);
\draw[fill=cyan] ($(a)+(5*\b,0)$) circle (1pt);
\draw[fill=goldenyellow] ($(a)+(\b/4,\a)$) circle (1pt);
\draw[fill=goldenyellow] ($(a)+(\b,\a)$) circle (1pt);
\draw[fill=goldenyellow] ($(a)+(2*\b,\a)$) circle (1pt);
\draw[fill=goldenyellow] ($(a)+(2*\b,2*\a)$) circle (1pt);
\draw[fill=goldenyellow] ($(a)+(4*\b,2*\a)$) circle (1pt);
\draw[fill=goldenyellow] ($(a)+(4*\b,\a)$) circle (1pt);
\draw[fill=goldenyellow] ($(a)+(4*\b,0)$) circle (1pt);
\draw[fill=magenta] ($(a)+(\b/4,0)$) circle (1pt);
\draw[fill=magenta] ($(a)+(\b,0)$) circle (1pt);
\draw[fill=magenta] ($(a)+(2*\b,0)$) circle (1pt);
\draw[fill=magenta] ($(a)+(3*\b,0)$) circle (1pt);
\draw[fill=magenta] ($(a)+(3*\b,0)$) circle (1pt);
\draw[fill=magenta] ($(a)+(3*\b,\a)$) circle (1pt);
\draw[fill=magenta] ($(a)+(3*\b,2*\a)$) circle (1pt);
\draw (0,\a/2) -- (11*\b/2,\a/2);
\draw (0,3*\a/2) -- (11*\b/2,3*\a/2);
\draw (0,5*\a/2) -- (11*\b/2,5*\a/2);
\draw (\b/2,-\a/2) -- (\b/2,3.75*\a);
\draw (3*\b/2,-\a/2) -- (3*\b/2,3.75*\a);
\draw (5*\b/2,-\a/2) -- (5*\b/2,3.75*\a);
\draw (7*\b/2,-\a/2) -- (7*\b/2,3.75*\a);
\draw (7*\b/2,-\a/2) -- (7*\b/2,3.75*\a);
\draw (9*\b/2,-\a/2) -- (9*\b/2,3.75*\a);
\node[above] at (\b,5*\a/2) {0};
\node[above] at (2*\b,5*\a/2) {1};
\node[above] at (3*\b,5*\a/2) {2};
\node[above] at (4*\b,5*\a/2) {3};
\node[above] at (5*\b,5*\a/2) {4};
\draw (\b/2,5*\a/2) -- (0,3.75*\a);
\node at (\b/3,3.5*\a) {\small $i$};
\node at (\b/7,2.8*\a) {\small $v$};

\def\c{-3}
\def\r{1}
\draw (\c,3*\a/2) node[draw,circle,fill=cyan,scale=0.7] {} -- (\c+\r,3*\a/2) node[draw,circle,fill=goldenyellow,scale=0.7] {} -- (\c+2*\r,3*\a/2) node[draw,circle,fill=magenta,scale=0.7] {};
\end{tikzpicture}
\caption{A graph $G$ and a table of values of a contraction $c:G\times I_4\to G$.}
\label{fig:ctable}
\end{figure}
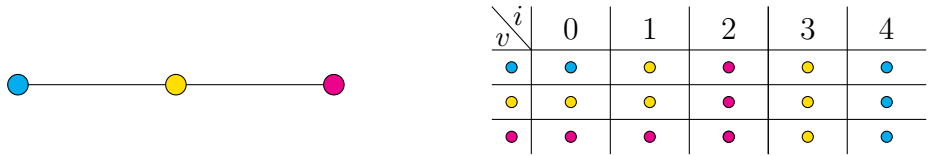

For example, let $G$ be the graph on the left side of Figure \ref{fig:ctable}, whose vertices are labeled by colors. Let $c:G\times I_4\to v_0$ be the contraction of $G$ with values $c(v,i)$ given in the table on the right side of Figure \ref{fig:ctable}, so that $m=4$. We can represent a graph map $\Z^2\to \g$ as an infinite 2-dimensional array of colored dots, where the color of the dot at $x\in \Z^2$ corresponds to the value of the map at $x$. Several such maps appear in Figure \ref{fig:dots2}, where we have restricted our attention to the domain $\Z^2_{\leq 5}$. We assume that each of these maps takes the color blue everywhere outside the pictured region. Let $f$ be the map on the top left of Figure \ref{fig:dots2}, and note that $r=2$ is the radius of $f$. Thus $\znler$ is outlined by dashes. The figure illustrates the maps $h_i$ from above for $i=0,\ldots,4$.

\begin{figure}[ht]
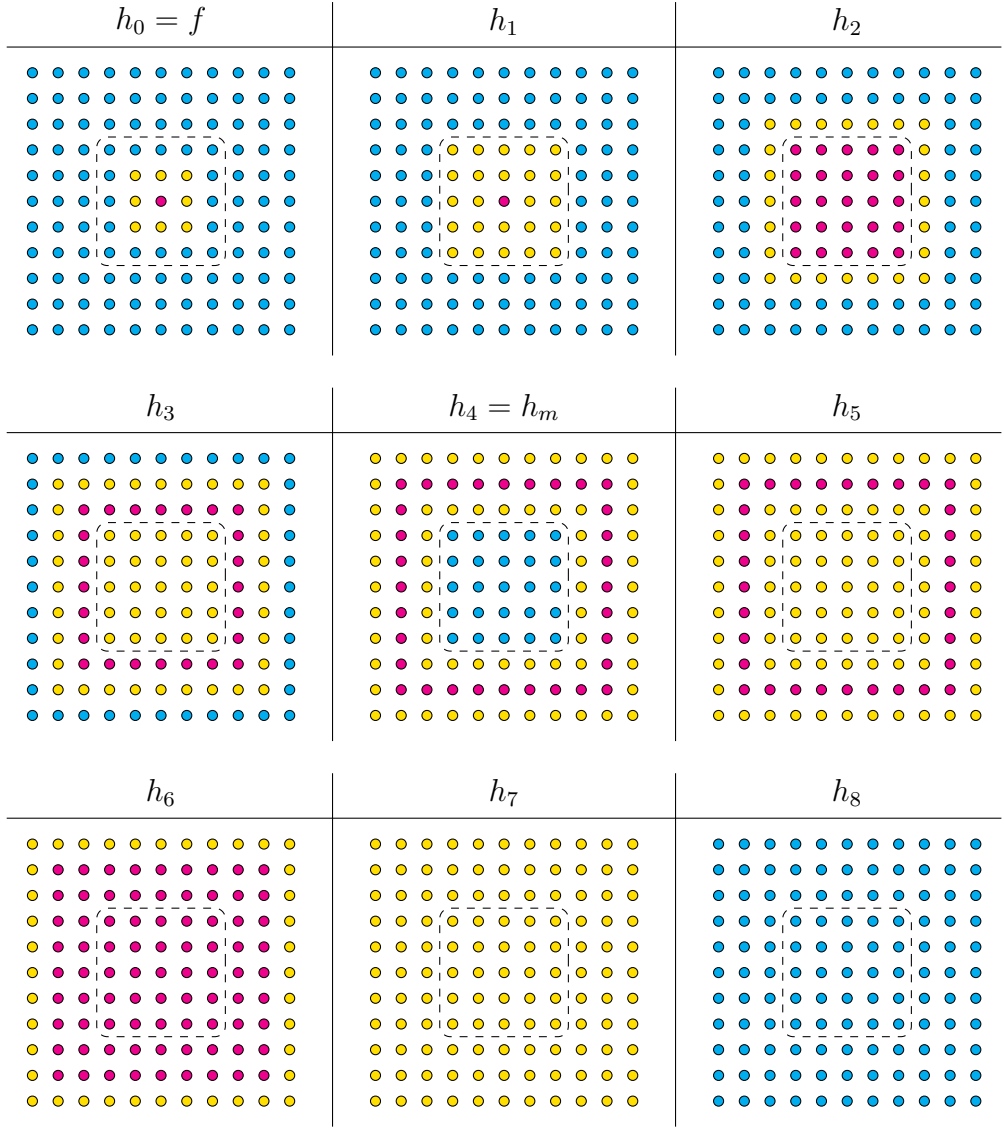

\centering
\include{tikz/dots2}
\caption{The stages of a homotopy $h:\Z^2\times I_8\to G$.}
\label{fig:dots2}
\end{figure}

Returning to the general case, we define $h_i$ for the remaining values of $i$. Suppose that $i\in \{m+1,\ldots,2m\}$. Let $h_i$ be constant on $\znler\cup B_{m+1}\cup\cdots\cup B_{i-m}$, taking the same value that $h_m$ takes on $B_{i-m}$. For $k>i-m$, let $h_i$ equal $h_m$ on $B_k$. Continuing the example from Figure \ref{fig:ctable}, these maps $h_i$ are illustrated in Figure \ref{fig:dots2} for $i=5,\ldots,8$. In the general case, it is routine to verify that $h:\Z^n\times I_{2m} \to G$ is a based homotopy; in particular, we have $h_0=f$ and $h_{2m}=v_0$. Hence $[f]=0$ in $A_n(G)$, as desired.
\end{proof}
\label{prop:contriv}
\end{prop}

If there exists a contraction $c$ of $G$ to $v_0$ such that $c(v_0,-)$ is constant, then we say that $G$ \emph{deformation retracts} onto $v_0$. In classical topology, there are examples of contractible spaces that do not deformation retract onto a point (see, e.g., \cite[Chapter 0, Exercise 6]{hatcher2002} and \cite[Section 2]{baillif2008}). However, these examples rely on behaviors that have no analogs in the discrete theory. We are led to ask the following.

\begin{q}
Suppose that $G$ is contractible. Does $G$ deformation retract onto a vertex?
\end{q}
\section{Triangle- and square-free graphs}
\label{sec:tsfree}

We prove Theorem \ref{thm:mainthm1}. If $G$ contains no 3- or 4-cycles, then $A_1(G)\cong \pi_1(G)$ by Proposition \ref{prop:cwcomp}, so we obtain a complete description of the discrete homotopy groups in this case.

\begin{mydef}
A \emph{path} in $\g$ is a sequence $(p_1,\ldots,p_k)$ of vertices of $G$ such that $p_i\simeq p_{i+1}$ for all $i=1,\ldots,k-1$.
\end{mydef}

\begin{proof}[Proof of Theorem \ref{thm:mainthm1}]
Suppose that $\g$ contains no 3- or 4-cycles, and set $n\geq 2$. Let $f:(\Z^n,\partial\Z^n)\to (\g,v_0)$ be a graph map of radius $r$. We show that $[f]=0$ in $A_n(G)$. For this proof only, consider $\g$ as directed with edge set $E$, so that if $(u,v)\in E$, then $(v,u)\notin E$. Let $F_E$ be the free group on $E$, whose elements are the identity 1 and reduced words in the letters $e$ and $e^{-1}$ for all $e\in E$.

We define a function $\tau$ from the set of paths in $\Z^n$ of length at least 2 to $F_E$ as follows. Given adjacent vertices $x$ ans $y$ of $\Z^n$, define an element $\tau(x,y)$ of $F_E$ by
\[\tau(x,y)=\begin{cases} (f(x),f(y))&\mbox{if }(f(x),f(y))\in E\\ (f(y),f(x))^{-1}&\mbox{if } (f(y),f(x))\in E\\ 1&\mbox{if }f(x)=f(y).\end{cases}\]
Given a path $P=(p_0,\ldots,p_\ell)$ in $\Z^n$ with $\ell>1$, let
\[\tau(P)=\tau(p_0,p_1)\tau(p_1,p_2)\cdots \tau(p_{\ell-1},p_\ell).\]

Let $g:\Z^n\to \g$ be given by
\begin{equation}
g(x)=\tau(P),
\label{eq:gfunc}
\end{equation}
where $P$ is any path in $\Z^n$ beginning in $\znger$ and ending with $x$. For example, consider the graph $G$ and the graph map $f:(\Z^2,\partial \Z^2)\to (G,a)$ illustrated in Figure \ref{fig:gpath}, where $r=4$ and $\znger$ consists of all vertices outside the dashed line. Let $x=(0,-2)$, so that $f(x)=c$. The three highlighted paths from $\znger$ to $x$ give three ways to compute $g(x)$, all resulting in the same value. Using the green path, clearly $g(x) = (a,b)(b,c)$. For the blue path, we have
\[g(x) = (a,b)(b,c)(c,d)(c,d)^{-1}(b,c)^{-1}(b,c) = (a,b)(b,c).\]
A similar computation with the red path gives the same result. Returning to the general case, it is not clear {\textit a priori} that $g$ is well defined. We will briefly assume that $g$ is well defined in order to finish the proof. We will then prove our assumption.

\begin{figure}[ht]
\centering
\begin{tikzpicture}[scale=1]
\tikzset{->-/.style={decoration={
			markings,
			mark=at position .5 with {\arrow[scale=2]{>}}},postaction={decorate}}}
\draw[dashed,rounded corners] (0.5/2,0.5/2) rectangle (7.5/2,7.5/2);
\foreach\x in {0,...,8} \node at (\x/2,0) {$a$};
\foreach\x in {0,...,8} \node at (\x/2,8/2) {$a$};
\foreach\x in {1,...,7} \node at (0,\x/2) {$a$};
\foreach\x in {1,...,7} \node at (8/2,\x/2) {$a$};
\node at (1/2,1/2) {$a$};
\node at (2/2,1/2) {$a$};
\node at (3/2,1/2) {$a$};
\node at (4/2,1/2) {$b$};
\node at (5/2,1/2) {$b$};
\node at (6/2,1/2) {$a$};
\node at (7/2,1/2) {$a$};

\node at (1/2,2/2) {$a$};
\node at (2/2,2/2) {$a$};
\node at (3/2,2/2) {$b$};
\node at (4/2,2/2) {$c$};
\node at (5/2,2/2) {$c$};
\node at (6/2,2/2) {$b$};
\node at (7/2,2/2) {$a$};

\node at (1/2,3/2) {$a$};
\node at (2/2,3/2) {$b$};
\node at (3/2,3/2) {$c$};
\node at (4/2,3/2) {$d$};
\node at (5/2,3/2) {$d$};
\node at (6/2,3/2) {$c$};
\node at (7/2,3/2) {$b$};

\node at (1/2,4/2) {$b$};
\node at (2/2,4/2) {$c$};
\node at (3/2,4/2) {$d$};
\node at (4/2,4/2) {$e$};
\node at (5/2,4/2) {$d$};
\node at (6/2,4/2) {$c$};
\node at (7/2,4/2) {$b$};

\node at (1/2,5/2) {$b$};
\node at (2/2,5/2) {$c$};
\node at (3/2,5/2) {$d$};
\node at (4/2,5/2) {$d$};
\node at (5/2,5/2) {$c$};
\node at (6/2,5/2) {$b$};
\node at (7/2,5/2) {$a$};

\node at (1/2,6/2) {$a$};
\node at (2/2,6/2) {$b$};
\node at (3/2,6/2) {$c$};
\node at (4/2,6/2) {$c$};
\node at (5/2,6/2) {$b$};
\node at (6/2,6/2) {$a$};
\node at (7/2,6/2) {$a$};

\node at (1/2,7/2) {$a$};
\node at (2/2,7/2) {$a$};
\node at (3/2,7/2) {$b$};
\node at (4/2,7/2) {$b$};
\node at (5/2,7/2) {$a$};
\node at (6/2,7/2) {$a$};
\node at (7/2,7/2) {$a$};

\def\a{0.21}
\draw[thick,kellygreen,rounded corners=2mm] (-\a,4/2+\a) -- (2/2+\a,4/2+\a) -- (2/2+\a,4/2-\a) -- (-\a,4/2-\a) -- cycle;
\draw[thick,magenta,rounded corners=2mm] (5/2+\a,8/2) -- (5/2+\a,8/2+\a) -- (5/2-\a,8/2+\a) -- (5/2-\a,5/2-\a) -- (6/2+\a,5/2-\a) -- (6/2+\a,6/2+\a) -- (2/2-\a,6/2+\a) -- (2/2-\a,4/2-\a) -- (2/2+\a,4/2-\a) -- (2/2+\a,4/2);
\draw[thick,magenta,rounded corners=0.2mm] (2/2+\a,4/2) -- (2/2+\a,6/2-\a) -- (6/2-\a,6/2-\a) -- (6/2-\a,5/2+\a) -- (5/2+\a,5/2+\a) -- (5/2+\a,8/2);
\draw[thick,cyan,rounded corners=2mm] (4/2-\a,0) -- (4/2-\a,-\a) -- (4/2+\a,-\a) -- (4/2+\a,3/2+\a) -- (3/2,3/2+\a);
\draw[thick,cyan, rounded corners=0.4mm] (3/2,3/2+\a) -- (2/2+\a,3/2+\a) -- (2/2+\a,4/2);
\draw[thick,cyan,rounded corners=2mm] (2/2+\a,4/2) -- (2/2+\a,4/2+\a) -- (2/2-\a,4/2+\a) -- (2/2-\a,3/2-\a) -- (3/2,3/2-\a);
\draw[thick,cyan, rounded corners=0.4mm] (3/2,3/2-\a) -- (4/2-\a,3/2-\a) -- (4/2-\a,0);

\def\b{-4}
\def\r{1.5}
\coordinate (a) at ($(\b,3.5/2)+(90:\r)$);
\coordinate (b) at ($(\b,3.5/2)+(162:\r)$);
\coordinate (c) at ($(\b,3.5/2)+(234:\r)$);
\coordinate (d) at ($(\b,3.5/2)+(306:\r)$);
\coordinate (e) at ($(\b,3.5/2)+(18:\r)$);
\draw[->-] (a) node[above] {$a$} -- (b) node[left] {$b$};
\draw[->-] (b) -- (c) node[below left] {$c$};
\draw[->-] (c) -- (d) node[below right] {$d$};
\draw[->-] (d) -- (e) node[right] {$e$};
\draw[->-] (e) -- (a);
\draw[fill=black] (a) circle (1.5pt);
\draw[fill=black] (b) circle (1.5pt);
\draw[fill=black] (c) circle (1.5pt);
\draw[fill=black] (d) circle (1.5pt);
\draw[fill=black] (e) circle (1.5pt);
\end{tikzpicture}
\caption{A graph $G=\Z_5$ and a graph map $f:(\Z^2,\partial \Z^2)\to (G,a)$.}
\label{fig:gpath}
\end{figure}

Assume that $g$ is well defined, and consider the Cayley graph $\Gamma(F_E,E)$ of $F_E$ with generating set $E$. Let $\g_E$ be the subgraph of $\Gamma(F_E,E)$ induced by
\[\{\tau(P): P\mbox{ is a path in }\Z^n\mbox{ beginning in }\znger\}.\]
Note that $\g_E$ is a finite tree, since it is a finite connected subgraph of the tree $F_E$. The distance between any two vertices $u$ and $v$ in $\g_E$ is given by $|u^{-1}v|$, where if $w\in F_E$, then $|w|$ denotes the length of the reduced word representing $w$. If $x\simeq y$ in $\Z^n$, then $|g(x)^{-1}g(y)|=|\tau(x,y)|\leq 1$, so $g(x)\simeq g(y)$ in $\g_E$. Hence $g:(\Z^n,\partial\Z^n)\to (\g_E,1)$ is a graph map. Since finite trees are contractible by Example \ref{eg:acyclic}, Proposition \ref{prop:contriv} gives a based homotopy $h$ from $g$ to the constant map $1$. Every vertex of $\g_E$ corresponds to a path in $\g$ from $v_0$ to some vertex; denote this vertex by $\pi(v)$. Clearly $\pi:(\g_E,1)\to (\g,v_0)$ is a graph map and $\pi\circ g = f$, so the composition $\pi\circ h$ is a based homotopy from $f$ to the constant map $v_0$. Hence $[f]$ is the identity element in $A_n(G)$, proving that $A_n(G)$ is trivial.

We now prove that $g$ is well defined. Let $P=(p_0,\ldots,p_\ell)$ be a path in $\Z^n$. Suppose for some $0<j<\ell$ that the vertices $p_{j-1}$, $p_j$ and $p_{j+1}$ are three corners of a square in $\Z^n$. Let $q_j$ denote the fourth corner of this square. Consider the operation of replacing the term $p_j$ in $P$ with $q_j$. We call this operation a \emph{corner swap}. Next suppose that $p_{k-1}=p_{k+1}$ for some $0<k<\ell$. Consider the operation of removing the terms $p_{k-1}$ and $p_k$ from $P$. We call this operation a \emph{backtrack deletion}.

We show that $\tau(P)$ is invariant under corner swaps and backtrack deletions. Let $p_j$ and $q_j$ be as above. The vertices $p_{j-1}$, $p_j$, $p_{j+1}$ and $q_j$ form a square in $\Z^n$. Since $f$ is a graph map and $G$ contains no squares, the image of this square under $f$ is a path in $G$ consisting of at most 3 vertices. The nontrivial possibilities are illustrated on the right side of Figure \ref{fig:sqtravel} up to relabeling. If the top case holds, then by using the definition of $\tau$, we see that
\begin{align*}
\tau(p_{j-1},p_j,p_{j+1}) &= \tau(p_j,p_{j+1})= \tau(p_j,q_j,p_{j+1})=\tau(p_{j-1},q_j,p_{j+1}).
\end{align*}
If the middle case holds, then
\[\tau(p_{j-1},p_j,p_{j+1})=1=\tau(p_{j-1},q_j,p_{j+1}).\]
If the bottom case holds, then $\tau(p_{j-1},p_j,p_{j+1})=\tau(p_{j-1},q_j,p_{j+1})$ immediately. Hence in any case, we have
\begin{align*}
\tau(P)&=\tau(p_0,\ldots,p_{j-1})\tau(p_{j-1},p_j,p_{j+1})\tau(p_{j+1},\ldots,p_\ell)\\
&=\tau(p_0,\ldots,p_{j-1})\tau(p_{j-1},q_j,p_{j+1})\tau(p_{j+1},\ldots,p_\ell)\\
&=\tau(p_0,\ldots,p_{j-1},q_j,p_{j+1},\ldots,p_\ell),
\end{align*}
proving that $\tau(P)$ is invariant under corner swaps. To prove that $\tau(P)$ is invariant under backtrack deletions, let $p_k$ be as in the previous paragraph, so that $p_{k-1}=p_{k+1}$. We have $\tau(p_{k-1},p_k,p_{k+1})=1$, so
\begin{align*}
\tau(P)&=\tau(p_0,\ldots,p_{k-1})\tau(p_{k-1},p_k,p_{k+1})\tau(p_{k+1},\ldots,p_\ell)\\
&=\tau(p_0,\ldots,p_{k-1})\tau(p_{k+1},\ldots,p_\ell)\\
&=\tau(p_0,\ldots,p_{k-2},p_{k+1})\tau(p_{k+1},\ldots,p_\ell)\\
&=\tau(p_0,\ldots,p_{k-2},p_{k+1},\ldots,p_\ell),
\end{align*}
as desired.

\begin{figure}[ht]
\begin{tikzpicture}[scale=3]
\draw (0,0) node[fill=white] {$q_j$} -- (1,0) node[fill=white] {$p_{j-1}$} -- (1,1) node[fill=white] {$p_j$} -- (0,1) node[fill=white] {$p_{j+1}$} --cycle;

\def\a{3}
\def\b{0.4}
\def\c{0.6}
\draw (\a-\b,5/6) node[left,fill=white] {$f(q_j)=f(p_{j+1})$} -- (\a+\b,5/6) node[right,fill=white] {$f(p_{j-1})=f(p_j)$};
\draw (\a-\b-\c,1/2) node[left,fill=white] {$f(p_{j+1})$} -- (\a,1/2) node[fill=white] {$f(p_j)=f(q_j)$} -- (\a+\b+\c,1/2) node[right,fill=white] {$f(p_{j-1})$};
\draw (\a-\b-\c,1/6) node[left,fill=white] {$f(q_j)$} -- (\a,1/6) node[fill=white] {$f(p_{j-1})=f(p_{j+1})$} -- (\a+\b+\c,1/6) node[right,fill=white] {$f(p_j)$};
\end{tikzpicture}
\caption{A square in $\Z^n$ and three possibilities for its image under $f$.}
\label{fig:sqtravel}
\end{figure}
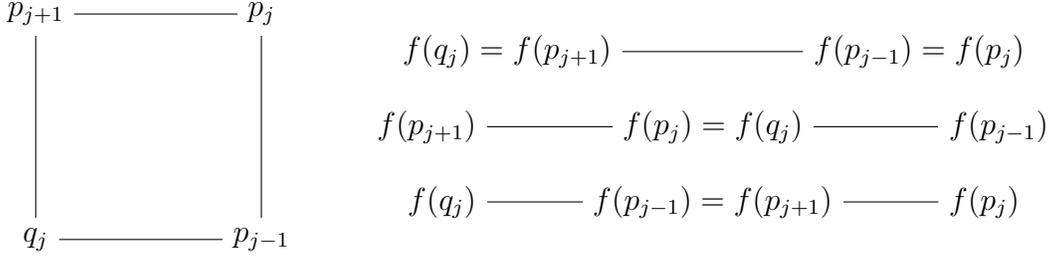

If $P=(p_0,\ldots,p_\ell)$ is a closed path in $\Z^n$, then it is not hard to see that $P$ can be transformed into the path $(p_0,p_1,p_0)$ by a sequence of corner swaps and backtrack deletions. Since $\tau(P)$ is invariant under these operations, we must have
\[\tau(P)=\tau(p_0,p_1,p_0)=1\]
whenever $P$ is closed. Let $x\in \Z^n$, and let $p,q\in \znger$. Let $P$ be a path in $\Z^n$ from $p$ to $x$ and $Q$ a path from $q$ to $x$. Let $R$ be a path from $q$ to $p$ with all vertices in $\znger$. Since $f$ is constant on $\znger$, we have $\tau(R)=1$. Write $PQ^{-1}R$ for the path obtained by concatenating $P$, the reverse of $Q$ and $R$. Since $PQ^{-1}R$ is closed, we have $\tau(PQ^{-1}R)=1$. Thus
\[1=\tau(PQ^{-1}R)=\tau(P)\tau(Q)^{-1}\tau(R)=\tau(P)\tau(Q)^{-1},\]
giving $\tau(P)=\tau(Q)$. It follows that if a path begins in $\znger$, its value under $\tau$ depends only on its endpoint. In other words, the function $g$ from \eqref{eq:gfunc} is well defined.
\end{proof}
\section{Cones and suspensions}
\label{sec:conesus}

We define discrete analogs of the cone and suspension functors from classical topology, and we classify the contractible cones and suspensions of graphs. This material plays a crucial role in the proof of Theorem \ref{thm:mainthm2}.

\begin{mydef}
For $s\geq 1$, let $C_s\g$ denote the graph obtained from $\g\times I_s$ by contracting the subgraph $\g\times \{0\}$. We call $C_s\g$ a \emph{cone} on $\g$.
\end{mydef}

\begin{prop}
	The cone $C_s\g$ on $\g$ is contractible for any $s\geq 1$.
\begin{proof}
Write $C\g=C_s\g$. For each $v\in \g$ and $i=1,\ldots,s$, there is a corresponding vertex $v_i$ of $C\g$ contained in $\g\times\{i\}$. We let $v_0$ denote the lone vertex in the image of $\g\times\{0\}$ in $C\g$. Thus the vertices of $C\g$ are indexed by $v_i$ for $v\in\g$ and $i=0,\ldots,n$, with the understanding that $v_0=w_0$ for all vertices $v$ and $w$ of $\g$. This labeling scheme is illustrated in Figure \ref{fig:cone}. Let $h:C\g\times I_s$ be given by
\[c(v_i,j)=\begin{cases}v_{s-j}&\mbox{if }i>s-j\\v_i&\mbox{if }i\leq s-j.\end{cases}\]
It is routine to check that $c$ is a graph map. In addition, $c(-,0)$ is the identity map on $C\g$ and $c(-,s)=v_0$. Hence $c$ is a contraction of $C\g$.
\end{proof}
\label{prop:conecon}
\end{prop}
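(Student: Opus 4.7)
The plan is to use the explicit contraction $c : C_s G \times I_s \to C_s G$ already written in the statement, which ``pushes the cone down onto its apex'' one level per time step. At stage $j$, the map $c_j$ keeps the lower portion $i \leq s - j$ fixed and collapses every strictly higher layer onto the single layer at height $s - j$. It is immediate from the formula that $c_0 = \mathrm{id}_{C_s G}$ and that $c_s$ is the constant map to $v_0$, so the real task is to verify that $c$ is a graph map from $C_s G \times I_s$ to $C_s G$.

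I would handle the two kinds of edges of the Cartesian product separately. For a \emph{vertical} edge $(v_i, j) \simeq (v_i, j+1)$ with $0 \leq j \leq s - 1$, only the column above the vertex $v \in G$ is involved. Either $i < s - j$, in which case $c_j$ and $c_{j+1}$ agree on $v_i$, or $i \geq s - j$, in which case the two images are $v_{s-j}$ and $v_{s-j-1}$; these are adjacent in $C_s G$ as consecutive heights in the same column, where the case $s - j - 1 = 0$ is precisely the apex edge $v_1 \sim v_0$.

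For a \emph{horizontal} edge $(u_i, j) \simeq (v_k, j)$, I would enumerate the three kinds of edges of $C_s G$: (a) edges $u_i \sim v_i$ arising from $u \sim v$ in $G$ at some height $i \geq 1$; (b) column edges $v_i \sim v_{i+1}$ for $1 \leq i \leq s-1$; and (c) apex edges $v_0 \sim u_1$. In each case a direct inspection shows that the image of the edge under $c_j$ is again an edge of one of these three types (possibly translated down to a lower height), or else degenerates to a single vertex; either way, adjacency is preserved.

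The only point that requires care, and so the main mild obstacle, is bookkeeping at the apex: because $v_0$ and $w_0$ are identified for all $v, w \in G$, one must check that the formula $c(v_0, j) = v_0$ is consistent with the quotient and that type~(a) and type~(c) edges behave correctly near the threshold $s - j = 0$, where an entire layer collapses to $v_0$. Once this case analysis is done, $c$ is a contraction and Proposition~\ref{prop:conecon} follows.
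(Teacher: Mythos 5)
Your proposal is correct and follows essentially the same route as the paper: the paper uses exactly the same contraction $c(v_i,j)=v_{s-j}$ for $i>s-j$ and $v_i$ otherwise, and simply declares the graph-map verification routine, which you carry out explicitly. The extra case analysis you supply (vertical versus horizontal edges, and the bookkeeping at the identified apex) is the content the paper omits, and it checks out.
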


\begin{figure}[ht]
\begin{tikzpicture}[scale=2]
\coordinate (a) at (0,0);
\coordinate (b) at (0.5,0.8);
\coordinate (c) at (1,0.5);
\coordinate (d) at (1,-0.5);
\coordinate (e) at (0.5,-0.8);
\draw[rounded corners=3pt] (a)node[left] {$a$}--(b) node[left] {$b$}--(c) node[right] {$c$}--(d) node[right] {$d$}--(e) node[left] {$e$}--cycle;
\draw[fill=black] (a) circle (0.8pt);
\draw[fill=black] (b) circle (0.8pt);
\draw[fill=black] (c) circle (0.8pt);
\draw[fill=black] (d) circle (0.8pt);
\draw[fill=black] (e) circle (0.8pt);

\def\s{0.8}
\coordinate (o) at (2.2,0);
\coordinate (o2) at (2.4,0);
\coordinate (a1) at ($(o)+(a)+(\s,0)$);
\coordinate (b1) at ($(o)+(b)+(\s,0)$);
\coordinate (c1) at ($(o)+(c)+(\s,0)$);
\coordinate (d1) at ($(o)+(d)+(\s,0)$);
\coordinate (e1) at ($(o)+(e)+(\s,0)$);
\coordinate (a2) at ($(o)+(a)+(3*\s,0)$);
\coordinate (b2) at ($(o)+(b)+(3*\s,0)$);
\coordinate (c2) at ($(o)+(c)+(3*\s,0)$);
\coordinate (d2) at ($(o)+(d)+(3*\s,0)$);
\coordinate (e2) at ($(o)+(e)+(3*\s,0)$);
\draw (o2)--(a1)--(a2);
\draw (o2)--(b1)--(b2);
\draw (o2)--(c1)--(c2);
\draw (o2)--(d1)--(d2);
\draw (o2)--(e1)--(e2);
\draw[fill=black] (o2) circle (0.8pt);
\draw[fill=black] (a1) circle (0.8pt);
\draw[fill=black] (b1) circle (0.8pt);
\draw[fill=black] (c1) circle (0.8pt);
\draw[fill=black] (d1) circle (0.8pt);
\draw[fill=black] (e1) circle (0.8pt);
\draw[fill=black] (a2) circle (0.8pt);
\draw[fill=black] (b2) circle (0.8pt);
\draw[fill=black] (c2) circle (0.8pt);
\draw[fill=black] (d2) circle (0.8pt);
\draw[fill=black] (e2) circle (0.8pt);
\draw (o2) node[left] {$a_0$};
\draw (a1)node[label={[label distance=1mm]350:$a_1$}] {}--(b1) node[above left] {$b_1$}--(c1) node[above right] {$c_1$}--(d1) node[below right] {$d_1$}--(e1) node[below left] {$e_1$}--cycle;
\draw (a2)node[right] {$a_2$}--(b2) node[above right] {$b_2$}--(c2) node[right] {$c_2$}--(d2) node[right] {$d_2$}--(e2) node[below right] {$e_2$}--cycle;
\end{tikzpicture}
\caption{The pentagon $\g=\Z_5$, left; and the cone $C_2\g$, right, with labeling scheme as in the proof of Proposition \ref{prop:conecon}.}
\label{fig:cone}
\end{figure}
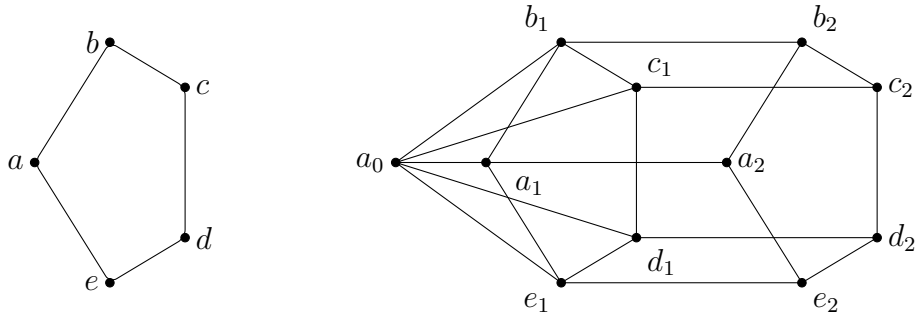

\begin{mydef}
For $t\geq 2$, let $S_t\g$ be the graph obtained from $\g\times I_t$ by contracting $\g\times \{0\}$ and $\g\times \{t\}$ to single vertices. We call $S_t\g$ a \emph{suspension} of $\g$. 
\label{def:sus}
\end{mydef}

\begin{prop}
If $t=2$ or $\g$ is contractible, then $S_tG$ is contractible.

\label{prop:suscon}
\begin{proof}
We will need a labeling scheme for the vertices of $S_t\g$. For each $v\in \g$ and $i=0,\ldots,t$, we let $v_i$ denote the corresponding vertex in the image of $\g\times \{i\}$ in $S\g$. Thus, for example, $u_0=v_0$ and $u_t=v_t$ for all $u,v\in G$. This labeling scheme is illustrated in Figure \ref{fig:susp}, where the pentagon $\g=\Z_5$ is labeled as in Figure \ref{fig:cone}.

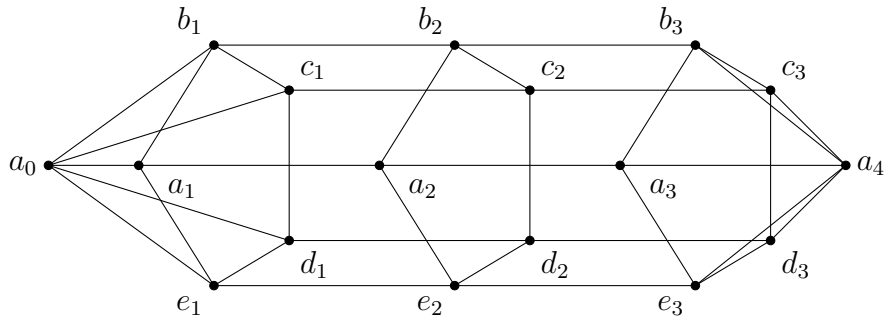
\begin{figure}[ht]
\centering
\begin{tikzpicture}[scale=2]
\coordinate (a) at (0,0);
\coordinate (b) at (0.5,0.8);
\coordinate (c) at (1,0.5);
\coordinate (d) at (1,-0.5);
\coordinate (e) at (0.5,-0.8);
\def\s{0.8}
\coordinate (o) at (0,0);
\coordinate (o2) at (0.2,0);
\coordinate (a1) at ($(o)+(a)+(\s,0)$);
\coordinate (b1) at ($(o)+(b)+(\s,0)$);
\coordinate (c1) at ($(o)+(c)+(\s,0)$);
\coordinate (d1) at ($(o)+(d)+(\s,0)$);
\coordinate (e1) at ($(o)+(e)+(\s,0)$);
\coordinate (a2) at ($(o)+(a)+(3*\s,0)$);
\coordinate (b2) at ($(o)+(b)+(3*\s,0)$);
\coordinate (c2) at ($(o)+(c)+(3*\s,0)$);
\coordinate (d2) at ($(o)+(d)+(3*\s,0)$);
\coordinate (e2) at ($(o)+(e)+(3*\s,0)$);
\coordinate (a3) at ($(o)+(a)+(5*\s,0)$);
\coordinate (b3) at ($(o)+(b)+(5*\s,0)$);
\coordinate (c3) at ($(o)+(c)+(5*\s,0)$);
\coordinate (d3) at ($(o)+(d)+(5*\s,0)$);
\coordinate (e3) at ($(o)+(e)+(5*\s,0)$);
\coordinate (p) at ($(o)+(5.5,0)$);
\draw (o2)--(a1)--(a2)--(a3)--(p);
\draw (o2)--(b1)--(b2)--(b3)--(p);
\draw (o2)--(c1)--(c2)--(c3)--(p);
\draw (o2)--(d1)--(d2)--(d3)--(p);
\draw (o2)--(e1)--(e2)--(e3)--(p);
\draw[fill=black] (o2) circle (0.8pt);
\draw[fill=black] (a1) circle (0.8pt);
\draw[fill=black] (b1) circle (0.8pt);
\draw[fill=black] (c1) circle (0.8pt);
\draw[fill=black] (d1) circle (0.8pt);
\draw[fill=black] (e1) circle (0.8pt);
\draw[fill=black] (a2) circle (0.8pt);
\draw[fill=black] (b2) circle (0.8pt);
\draw[fill=black] (c2) circle (0.8pt);
\draw[fill=black] (d2) circle (0.8pt);
\draw[fill=black] (e2) circle (0.8pt);
\draw[fill=black] (a3) circle (0.8pt);
\draw[fill=black] (b3) circle (0.8pt);
\draw[fill=black] (c3) circle (0.8pt);
\draw[fill=black] (d3) circle (0.8pt);
\draw[fill=black] (e3) circle (0.8pt);
\draw[fill=black] (p) circle (0.8pt);
\draw (o2) node[left] {$a_0$};
\draw (p) node[right] {$a_4$};
\draw (a1)node[label={[label distance=1mm]350:$a_1$}] {}--(b1) node[above left] {$b_1$}--(c1) node[above right] {$c_1$}--(d1) node[below right] {$d_1$}--(e1) node[below left] {$e_1$}--cycle;
\draw (a2)node[label={[label distance=1mm]350:$a_2$}] {}--(b2) node[above left] {$b_2$}--(c2) node[above right] {$c_2$}--(d2) node[below right] {$d_2$}--(e2) node[below left] {$e_2$}--cycle;
\draw (a3)node[label={[label distance=1mm]350:$a_3$}] {}--(b3) node[above left] {$b_3$}--(c3) node[above right] {$c_3$}--(d3) node[below right] {$d_3$}--(e3) node[below left] {$e_3$}--cycle;
\end{tikzpicture}
\caption{The labeling scheme of $S_4 \Z_5$.}
\label{fig:susp}
\end{figure}

Suppose that $t=2$, and fix $w\in G$. There is contraction $c:S_t\g\times I_2\to S_t\g$ of $S_tG$ to $v_0$ given by
\[c(v_i,1)=\begin{cases} v_0&\mbox{if }i<2\\w_1&\mbox{if }i=2.\end{cases}\]
It is routine to verify that $c$ is a graph map.

Suppose now that $t$ is arbitrary and $\g$ is contractible. Let $C:\g\times I_m\to \g$ be a contraction of $G$ to $w\in G$. Define a function $\kappa:S_t\g\times I_{m+t}\to S_t\g$ as follows:
\[\kappa(v_i,j)=\begin{cases} C(v,j)_i&\mbox{if }0\leq j\leq m\\ w_{i+m-j}&\mbox{if }m<j\leq i+m\\ v_0&\mbox{if }i+m<j\leq m+t.\end{cases}\]
It is routine to verify that $C$ is a contraction of $S_tG$ to $v_0$.
\end{proof}
\end{prop}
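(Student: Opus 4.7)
The plan is to treat the two hypotheses separately, producing an explicit contraction in each case. Throughout, I adopt the labeling of $S_t\g$ shown in Figure \ref{fig:susp}: each vertex is either a cone point $v_0$ or $v_t$, or has the form $v_i$ for some $v \in \g$ and $0<i<t$, and two vertices $v_i, u_j$ are adjacent in $S_t\g$ precisely when either $i=j$ and $v \simeq u$ in $\g$, or $v=u$ and $|i-j|=1$ (with the cone-point identifications at $i=0,t$). With this in hand, checking that a proposed homotopy $c : S_t\g\times I_s\to S_t\g$ is a graph map amounts to verifying $c(-,j)$ is a graph map for each $j$, and $c(v_i,j)\simeq c(v_i,j+1)$ for each vertex and each $j$.

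For $t=2$, my idea is a two-step contraction to $v_0$. Fix any $w \in \g$ and set $c(-,0) = \mathrm{id}$, $c(-,2) \equiv v_0$, with the intermediate step $c(v_i,1) = v_0$ for $i<2$ and $c(v_2,1) = w_1$. Heuristically, at time $1$ the middle layer and the bottom cone point have already collapsed to the base while the top cone point $v_2$ has slid down to the neighbor $w_1$ of $v_0$; at time $2$, the remaining vertex $w_1$ is contracted to $v_0$. The required verifications reduce to the observations $v_2\simeq w_1$ and $w_1 \simeq v_0$ in $S_2\g$.

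For general $t$ with $\g$ contractible, let $C : \g\times I_m \to \g$ be a contraction to some $w \in \g$. The plan is to build a contraction $\kappa : S_t\g\times I_{m+t}\to S_t\g$ in two phases. In the first phase $0 \leq j \leq m$, I apply $C$ simultaneously on every horizontal layer by setting $\kappa(v_i,j) = C(v,j)_i$; at time $j=m$, every layer $i$ has collapsed to the single vertex $w_i$, so $S_t\g$ has been retracted onto the vertical spine $\{w_0,w_1,\ldots,w_t\}$, which is isomorphic to the path $I_t$. In the second phase $m \leq j \leq m+t$, I retract this spine from the top down by sliding level $i$ toward the base: set $\kappa(v_i, m+s) = w_{i-s}$ when $i\geq s$, and $\kappa(v_i,m+s) = v_0$ otherwise. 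Combining both phases gives precisely the piecewise formula for $\kappa$ in the statement.

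The main obstacle is the bookkeeping of the graph-map conditions across the piecewise definition. Within each phase the verifications are immediate: phase 1 is layerwise the graph map $C(-,j)$ applied to $\g$, and phase 2 acts on each layer as a single vertex in the contracting spine. The delicate points are the two transitions, (i) at $j=m$, where the value $C(v,m)_i = w_i$ must agree with the initial value of the spine-contraction formula, and (ii) at $j = i+m$ for each level $i$, where the sliding value $w_{i+m-j}$ first reaches $w_0 = v_0$ and then remains fixed. Once these two boundaries are written out, checking $\kappa(v_i,j)\simeq \kappa(v_i,j+1)$ in each regime, and $\kappa(v_i,j)\simeq \kappa(u_{i'},j)$ for each adjacent pair in $S_t\g$, reduces to a short case analysis using the adjacency description from the first paragraph, and one concludes that $\kappa$ is a contraction of $S_t\g$ to $v_0$.
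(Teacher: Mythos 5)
Your proposal is correct and follows essentially the same route as the paper: the same two-step contraction for $t=2$, and for general $t$ the same two-phase construction (apply the contraction $C$ layerwise, then slide the resulting spine $\{w_0,\ldots,w_t\}$ down to $v_0$), yielding the identical piecewise formula for $\kappa$. Your explicit attention to the adjacency structure of $S_t\g$ and to the transition times $j=m$ and $j=i+m$ fills in the verifications the paper leaves as routine.
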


\section{Toward a discrete Hurewicz theorem}
\label{sec:hurewicz}

We prove Theorem \ref{thm:hursurj} below, which implies Theorem \ref{thm:mainthm2} from the introduction. We first recall the definition of discrete singular cubical homology groups of graphs and define the map $\psi:A_n(G)\to \hc_n(G)$ from \eqref{eq:intropsi}.

\subsection{Discrete singular cubical homology of graphs}

Let $I_m^n$ denote the $n$-fold Cartesian prouct of $I_m$, regarded as a subgraph of $\Z^n$. For each $n\geq 0$, let $Q_n$ be the graph given by
\[
Q_n=\begin{cases}I_0&\mbox{if }n=0\\ I_1^n&\mbox{if }n\geq 1.\end{cases}\]
An \emph{$n$-cube of $G$} is a graph map $\sigma :Q_n\to G$. For $1\leq i\leq n$, let $D_i^-\sigma$ and $D_i^+\sigma$ be the $(n-1)$-cubes of $G$ given by
\begin{align*}
D_i^-\sigma(a_1,\ldots,a_{n-1}) &= \sigma(a_1,\ldots,a_{i-1},0,a_i,\ldots,a_{n-1})\\
D_i^+\sigma(a_1,\ldots,a_{n-1}) &= \sigma(a_1,\ldots,a_{i-1},1,a_i,\ldots,a_{n-1}).
\end{align*}
We say that $\sigma$ is \emph{degenerate} if $D_i^-\sigma = D_i^+\sigma$ for some $i$. Let $L_n(G)$ denote the free $\Z$-module generated by all $n$-cubes of $G$, and let $D_n(G)$ denote the submodule generated by the degenerate $n$-cubes of $G$. Let
\[C_n(G)=L_n(G)/D_n(G).\]
The elements of $C_n(G)$ are called \emph{$n$-chains} of $G$.


Given an $n$-cube $\sigma$ of $G$ with $n\geq 1$, let
\[\partial_n(\sigma) = \sum_{i=1}^n (-1)^i (D_i^- \sigma - D_i^+ \sigma).\]
Extend linearly to obtain a map $\partial_n:C_n(G)\to C_{n-1}(G)$. It is routine to check that $(C_\bullet(G),\partial_\bullet)$ is a chain complex. Let
\[\hc_n(G)=\ker(\partial_n)/\operatorname{im}(\partial_{n+1}).\]
The group $\hc_n(G)$ is called the \emph{$n$th discrete singular cubical homology group} of $G$. If $z\in \ker(\partial_n)$, then we write $\overline{z}$ for the equivalence class of $z$ in $\hc_n(G)$.

There is a discrete Hurewicz theorem in dimension 1:

\begin{thm}[{\cite[Theorem 4.1]{barcelo2014}}]
For any graph $G$, there is a surjective map
\begin{equation}
\psi: A_1(G)\to \hc_1(G)
\label{eq:hurmap1}
\end{equation}
whose kernel is $[A_1(G),A_1(G)]$.
\label{thm:hurmap1}
\end{thm}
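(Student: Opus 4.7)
The plan is to construct $\psi$ explicitly on representatives, verify its invariance properties, and then handle surjectivity and the kernel computation separately. Given a graph map $f:(\Z,\partial\Z)\to(G,v_0)$ of radius $r$, for each integer $i$ let $\sigma_i^f:Q_1\to G$ be the 1-cube defined by $\sigma_i^f(a)=f(i+a)$. Set
\[z_f=\sum_{i=-r}^{r-1}\sigma_i^f\in C_1(G).\]
Since $\partial_1\sigma_i^f=f(i+1)-f(i)$, the sum telescopes under $\partial_1$ to $f(r)-f(-r)=0$, so $z_f$ is a 1-cycle. I define $\psi([f])=\overline{z_f}\in\hc_1(G)$. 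Extending the range of summation does not change $z_f$ in $C_1(G)$, since the additional terms are degenerate 1-cubes at $v_0$.

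For based homotopy invariance, it suffices to treat a one-step homotopy $h:\Z\times I_1\to G$ from $f$ to $g$. For each $i$ form the 2-cube $\tau_i(a,b)=h(i+a,b)$; a direct application of the boundary formula gives
\[\partial_2\tau_i=\sigma_i^f-\sigma_i^g+(D_1^+\tau_i-D_1^-\tau_i).\]
Summing $i$ over a window large enough to contain the radii of $f$ and $g$, the vertical contributions telescope to 1-cubes of the form $b\mapsto h(\pm R,b)$, which are constant at $v_0$ and hence degenerate in $C_1(G)$. Thus $z_f-z_g$ is a boundary, so $\psi$ descends to $A_1(G)$. The concatenation formula \eqref{eq:concat} yields a chain equal to $z_f+z_g$ modulo degenerate 1-cubes, so $\psi$ is a homomorphism.

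For surjectivity, first observe that for any edge $uv$ of $G$ the 2-cube $\tau$ with $\tau(0,0)=\tau(0,1)=\tau(1,1)=u$ and $\tau(1,0)=v$ has two degenerate faces and satisfies $\partial_2\tau=\sigma(u,v)+\sigma(v,u)$ in $C_1(G)$, so the two orientations of an edge have opposite classes in $\hc_1(G)$. Any 1-cycle $z\in\ker\partial_1$ then decomposes as a $\Z$-linear combination of directed cycles in $G$. Each directed cycle $C$ based at some vertex $w$ lifts to a loop $\ell_C\in A_1(G)$ by prepending a fixed path $P_w$ from $v_0$ to $w$ and appending its reverse; by the reversal relation just noted, $\psi(\ell_C)$ equals the class of the chain of $C$. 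A suitable product of the $\ell_C$'s therefore realizes $\overline{z}$.

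Finally, the kernel. Since $\hc_1(G)$ is abelian, the commutator subgroup $[A_1(G),A_1(G)]$ is automatically contained in $\ker\psi$. The reverse inclusion is the main obstacle: suppose $z_f=\partial_2 c$ for some $c=\sum_j n_j\tau_j\in C_2(G)$. Each 2-cube $\tau_j$ has image a closed walk of length at most $4$ in $G$, bounding a 3- or 4-cycle or a degenerate configuration, and by Proposition \ref{prop:cwcomp} every such walk is null-homotopic in $A_1(G)\cong\pi_1(X_G)$. The cleanest route is to exhibit an isomorphism $\hc_1(G)\cong H_1(X_G;\Z)$ that intertwines $\psi$ with the classical abelianization map $\pi_1(X_G)\to H_1(X_G;\Z)$ and then invoke the classical Hurewicz theorem. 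The technical challenge is translating the cubical boundary relation $z_f=\partial_2 c$ into a presentation-level identity expressing $[f]$ as a product of commutators and conjugates of null-homotopic loops in $A_1(G)$.
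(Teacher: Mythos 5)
The paper does not actually prove this statement; it is quoted directly from \cite[Theorem 4.1]{barcelo2014}, so there is no internal proof to compare against. That said, most of your argument is sound and matches the explicit description of $\psi$ given after the theorem: the telescoping computation showing $z_f\in\ker\partial_1$, the one-step homotopy invariance via the 2-cubes $\tau_i$ (with the vertical faces telescoping to constant, hence degenerate, 1-cubes), the homomorphism property, and the surjectivity argument are all correct. In particular, the orientation-reversal relation $\overline{\sigma(u,v)}=-\overline{\sigma(v,u)}$, obtained from a 2-cube with two degenerate faces, followed by a circulation decomposition of an arbitrary 1-cycle into directed cycles and conjugation back to the basepoint, is a complete proof of surjectivity.

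The genuine gap is the inclusion $\ker\psi\subseteq[A_1(G),A_1(G)]$, which you explicitly leave as a ``technical challenge.'' Your observation that each 2-cube traces a closed walk of length at most 4 that is null-homotopic in $A_1(G)\cong\pi_1(X_G)$ is true but does not finish the job: the relation $z_f=\partial_2 c$ is an \emph{additive} identity in $C_1(G)$, and converting it into a \emph{multiplicative} identity expressing $[f]$ as a product of commutators and conjugates of null-homotopic boundary loops is precisely the content of any Hurewicz-type kernel computation --- it is where the commutators come from, and it cannot be waved through. Your proposed alternative route, an isomorphism $\hc_1(G)\cong H_1(X_G;\Z)$ intertwining $\psi$ with the classical Hurewicz map for $X_G$, would indeed suffice, but that isomorphism is itself a nontrivial theorem (one must show that the degenerate 2-cubes and the boundaries $\partial_2\tau$ account for exactly the relations imposed by the 2-cells of $X_G$, i.e.\ the 3- and 4-cycles), and you neither prove it nor reduce it to something already established. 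As written, half of the statement --- the identification of the kernel --- remains unproved.
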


The map $\psi$ can be described explicitly. For this, and for the remainder of this section, we will represent based homotopy classes $[f]\in A_n(G)$ as graph maps
\[(\imn,\bimn)\to (G,v_0).\]
This is justified by the following argument. Let $f:(\Z^n,\partial \Z^n)\to (G,v_0)$ be any graph map of radius $r$. Let $h:\Z^n\times I_r\to G$ be given by $h(x,i) = f(x_1-i,\ldots,x_n-i)$ for all $(x,i)\in \Z^n\times I_r$. It is routine to check that $h$ is a based homotopy from $h_0=f$ to a graph map $h_r$ that is constant outside the interior of $I_m^n$. By restricting our attention to the domain $I_m^n$, we obtain a map $(\imn,\bimn)\to (G,v_0)$ that is based homotopic to $f$.

Following the above, suppose that $f:(I_m,\partial I_m)\to (G,v_0)$ is a graph map, and for $i=0,\ldots, m-1$ let $f^i:Q_1\to G$ be given by $f^i(0)=f(i)$ and $f^i(1)=f(i+1)$. We have
\[\psi([f])=\sum_{i=0}^{m-1} f^i.\]
For higher dimensions, we conjecture a direct analog of the classical theorem.

\begin{conj}
If $n\geq 2$ and $A_k(G)$ is trivial for all $k<n$, then $A_n(G)\cong \hc_n(G)$.

\label{conj:hur}
\end{conj}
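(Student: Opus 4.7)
The plan is to mimic the classical proof of the Hurewicz theorem, using the explicit description of $\psi$ via representatives of $[f]\in A_n(G)$ as graph maps $(\imn,\bimn)\to (G,v_0)$. The natural extension of the dimension-1 formula given in Theorem \ref{thm:hurmap1} is
\[
\psi([f]) = \sum_{x\in I_{m-1}^n} f_x, \qquad f_x(a)=f(x+a)\text{ for }a\in Q_n,
\]
and assuming this formula one proves surjectivity and injectivity of $\psi$ separately, each by a skeletal induction powered by the hypothesis $A_k(G)=0$ for $k<n$.

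For surjectivity, let $\overline{z}\in \hc_n(G)$ be represented by a cycle $z=\sum_i \epsilon_i \sigma_i$ with $\epsilon_i\in\{\pm 1\}$ and $\sigma_i:Q_n\to G$. I would place the cubes $\sigma_i$ at disjoint unit subcubes of a sufficiently large box $\imn$, adjusting orientations to encode the signs $\epsilon_i$, and then try to extend the partial assignment to a graph map $f:(\imn,\bimn)\to(G,v_0)$ whose image on $\bimn$ is $v_0$. The condition $\partial_n z=0$ guarantees that, after grouping cubes, boundary faces cancel in matched pairs, so the partial data is combinatorially consistent along the $(n-1)$-dimensional strata lying between cubes. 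The triviality of $A_k(G)$ for $k<n$ is then used to extend inductively: connectivity supplies paths linking cubes, triviality of $A_1(G)$ fills two-dimensional patches between those paths, and so on up to $A_{n-1}(G)=0$ for the final codimension-one extension. Contractibility of grid graphs (Example \ref{eg:gridcon}) then collapses everything outside a large box to $v_0$.

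For injectivity, suppose $\psi([f])=0$ for a representative $f:(\imn,\bimn)\to(G,v_0)$, so that $\sum_x f_x=\partial_{n+1}w$ in $C_n(G)$ for some $w=\sum_j \delta_j \tau_j \in C_{n+1}(G)$. I would view $w$ as a combinatorial template for an $(n+1)$-dimensional filling: place the $\tau_j$ inside $\imn\times I_\ell$ so that their $n$-dimensional boundaries match the subdivision of $f$ along $\imn\times\{0\}$ and the constant map $v_0$ along $\imn\times\{\ell\}$. The remaining vertices are then filled in by the same skeletal induction as in surjectivity, with the cones and suspensions of Section \ref{sec:conesus} (together with Propositions \ref{prop:conecon} and \ref{prop:suscon}) serving as the elementary null-homotopies used at each level. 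The resulting graph map is a based homotopy $\Z^n\times I_M\to G$ from $f$ to the constant map $v_0$, witnessing $[f]=0$ in $A_n(G)$.

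The hard part will be the skeletal extension at the heart of both halves of the argument. In the classical setting, cellular approximation and the CW structure on spheres and balls make this extension routine, but as noted in \cite{babson2006} no cubical analog of the simplicial approximation theorem is known, and the absence of such a tool is precisely the obstruction. A viable route would be to develop a relative theory of discrete homotopy groups $A_n(G,H,v_0)$ for a subgraph $H\subseteq G$, together with a long exact sequence, and to establish an extension lemma of the form: given a graph map defined on a subgrid together with an extension to its discrete $(k-1)$-skeleton, triviality of $A_k(G)$ permits extension to the $k$-skeleton. Making this lemma precise in the discrete setting, and verifying that the cancellations forced by $\partial_n z=0$ and the template $w$ are compatible with the inductive extension at every level, is where I expect the real technical difficulty to lie.
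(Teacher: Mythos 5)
The statement you are trying to prove is stated in the paper as Conjecture \ref{conj:hur}; the paper does not prove it, and only establishes the much weaker Theorem \ref{thm:hursurj} (surjectivity of $\psi$ for a specific family of iterated suspensions, by explicitly constructing maps $g_n$ that hit explicit generators $\overline{\phi(\gamma_n)}$). Your proposal is not a proof either: both halves of your argument rest on a ``skeletal extension lemma'' that you name but do not establish, and that lemma is precisely the open problem. Concretely, you need the implication ``$A_k(G)=0$ implies every graph map from the boundary of a $(k+1)$-dimensional grid box into $G$ extends over the box.'' This does not follow from the definitions: $A_k(G)$ is defined via based maps $(\Z^k,\partial\Z^k)\to(G,v_0)$ that are constant outside a finite radius, and translating between such maps and maps defined on the boundary complex of a grid box (an unbased ``discrete sphere'') is exactly the content of the cubical approximation theorem that, as the introduction notes, was proposed in \cite{babson2006} and ``has resisted all attempts at a proof.'' Without it, neither the claim that triviality of $A_1,\ldots,A_{n-1}$ lets you fill in the strata between your placed cubes, nor the claim that the chain $w$ with $\partial_{n+1}w=\phi(f)$ can be realized as a based homotopy $\Z^n\times I_M\to G$, has any support. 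A secondary but real issue: in the surjectivity step, $\partial_n z=0$ is an identity in $C_{n-1}(G)=L_{n-1}(G)/D_{n-1}(G)$, so faces need only cancel modulo degenerate cubes; your ``matched pairs'' of boundary faces may fail to match on the nose, and there is no notion of ``reversing the orientation'' of a discrete cube $\sigma:Q_n\to G$ to encode a sign $\epsilon_i=-1$. Your plan is a reasonable roadmap for how one would \emph{want} to prove the conjecture, and correctly identifies where the difficulty sits, but as written it proves nothing beyond what the paper already concedes is open.
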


\begin{eg}
Let $G_1$ be a graph containing no 3- or 4-cycles. It follows from Proposition \ref{prop:cwcomp} and \cite[Proposition 1A.2]{hatcher2002} that $A_1(G_1)$ is a free group of rank $k$, where $k$ is the number of edges of $G_1$ not contained in a given spanning tree. Theorem \ref{thm:hurmap1} implies that
\[\hc_1(G_1)\cong \Z^k,\]
where $\Z^k$ denotes a free abelian group of rank $k$. An explicit set of generators, although not always minimal one, is the set of $\psi([f_Z])$ for cycles $Z$ of $G_1$, where the maps $f_Z$ are defined as in Example \ref{eg:babyfg}. For each $n\geq 1$, let $G_{n+1}=S_{n+3}G_n$. It will follow from Proposition \ref{prop:homgen}(ii) below that $\hc_n(G_n)\cong \Z^k$ for all $n\geq 1$. This result originally appeared in \cite{barcelo2019}.
\label{eg:homsus}
\end{eg}

\subsection{A proposed discrete Hurewicz map}

We define a map
\[\psi: A_n(G)\to \hc_n(G)\]
that generalizes \eqref{eq:hurmap1}. Let $I_{m_1,\ldots,m_n}\subseteq \Z^n$ be the grid graph given by
\begin{equation}
I_{m_1,\ldots,m_n} = I_{m_1} \times \cdots \times I_{m_n}.
\end{equation}
For the following definitions, let $L=I_{m_1,\ldots,m_n}$ and $L'=I_{m_1-1,\ldots,m_n-1}$ for some $m_i\geq 1$, and let $f:L\to G$ be a graph map.

\begin{mydef}
For each $x\in L'$, let $f^x : Q_n\to \g$ be the graph map given by 
\begin{equation}
f^x(y)= f(x+y).
\end{equation}
The maps $f^x$ are called the \emph{$n$-cubes of $f$}. If $x+Q_n\subseteq V$ for some $V\subseteq \Z^n$, then we say that $f^x$ is \emph{contained in $V$}.
\end{mydef}

\begin{mydef}
Define an element $\phi(f)$ of $C_n(G)$ by
\begin{equation}
\phi(f)=\sum_{x\in L'} f^x.
\end{equation}
In other words, $\phi(f)$ is the sum of all $n$-cubes of $f$.
\end{mydef}

\begin{lemma}
We have the following:
\begin{enumerate}[(i)]
\item If $[f]\in A_n(G)$, then $\phi(f)\in \ker(\partial_n)$
\item If $[f]=[g]$ in $A_n(G)$, then $\overline{\phi(f)}=\overline{\phi(g)}$.
\end{enumerate}
\begin{proof}
First we prove (i). Let $f:(\imn,\bimn)\to (G,v_0)$ be a graph map. For $i=1,\ldots,n$ let $\Omega_i=\{x\in I_{m-1}^n:x_i=0\}$. Let $\epsilon_i\in \Z^n$ be the $i$th standard basis vector. We have
\[\partial_n \phi(f) = \sum_{i=1}^n (-1)^i \sum_{x\in \Omega_i} \sum_{r=0}^{m-1} D^-_i f^{x+r\epsilon_i}-D^+_i f^{x+r\epsilon_i}.\]
For each $i$ and each $x\in \Omega_i$, the innermost sum telescopes, and the leftover portion
\[D^-_i f^x - D^+_i f^{x+(m-1)\epsilon_i}\]
is 0, since $f$ is constant on $\bimn$. This proves (i).

To prove (ii), let $h$ be a homotopy from $f$ to $g$, say $h:I_m^{n+1}\to G$. By another telescoping argument, it is not hard to see that
\[\partial_{n+1} \phi(h) = (-1)^{n+1}( \phi(f) - \phi(g))\]
in $C_n(G)$, proving (ii).
\end{proof}
\label{lem:hurmap}
\end{lemma}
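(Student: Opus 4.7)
Both parts rest on a single telescoping identity combined with the observation that an $(n-1)$-cube of $f$ lying in a boundary face of $\imn$ is constant at $v_0$ and hence degenerate. The identity, immediate from the definition of $f^x$ and of the face maps $D_i^\pm$, is
\[ D_i^+ f^{x+r\epsilon_i}=D_i^- f^{x+(r+1)\epsilon_i}, \]
since both are the $(n-1)$-cube $y\mapsto f(z)$ where $z$ has $i$th coordinate equal to $r+1$ and the remaining coordinates are determined by $x$ and $y$.

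For (i), I would expand
\[ \partial_n \phi(f)=\sum_{x\in L'}\sum_{i=1}^n (-1)^i\bigl(D_i^- f^x-D_i^+ f^x\bigr), \]
swap the order of summation, and for each fixed direction $i$ partition $L'=I_{m-1}^n$ into the $m$-element columns $\{x+r\epsilon_i:0\le r\le m-1\}$ indexed by $x\in \Omega_i$. Applying the identity above collapses each inner sum to
\[ D_i^- f^x-D_i^+ f^{x+(m-1)\epsilon_i}. \]
The first term is an $(n-1)$-cube of $f$ whose $i$th input coordinate is constantly $0$, and the second has $i$th coordinate constantly $m$; both therefore lie entirely in $\bimn$, where $f$ takes the value $v_0$. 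Being constant, they are degenerate, hence zero in $C_{n-1}(G)$, and (i) follows.

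For (ii), I first invoke the normalization already recorded in the paper to assume that $f$ and $g$ are graph maps $(\imn,\bimn)\to(G,v_0)$ on a common domain (enlarging $m$ and padding by $v_0$ if necessary), and take a based homotopy $h:\imn\times I_M\to G$; after a further padding one may assume $M=m$ so that the domain is $I_m^{n+1}$. Now consider $\phi(h)\in C_{n+1}(G)$ and compute $\partial_{n+1}\phi(h)$ by the same column-telescoping as in (i), but now in $n+1$ directions. For the directions $i=1,\dots,n$ the surviving end terms lie in $\bimn\times I_M$ (where $h$ is constant $v_0$) and are degenerate, so they contribute nothing. For the remaining direction $i=n+1$ the telescoping collapses each column over $x\in\Omega_{n+1}$ to $D_{n+1}^- h^x - D_{n+1}^+ h^{x+(M-1)\epsilon_{n+1}}$, and unwinding the definitions gives $D_{n+1}^- h^x=f^x$ and $D_{n+1}^+ h^{x+(M-1)\epsilon_{n+1}}=g^x$. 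Summing over $x\in\Omega_{n+1}$ then yields exactly $\phi(f)-\phi(g)$, so
\[ \partial_{n+1}\phi(h)=(-1)^{n+1}\bigl(\phi(f)-\phi(g)\bigr), \]
which shows $\overline{\phi(f)}=\overline{\phi(g)}$ in $\hc_n(G)$.

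The only real obstacle is bookkeeping: confirming the telescoping identity and then, in (ii), correctly identifying the bottom and top face cubes of $h$ in the homotopy direction with $f^x$ and $g^x$, while checking that the other "sideways" face terms indeed land in $\bimn\times I_M$ and produce degenerate cubes rather than nontrivial contributions. Both are direct consequences of the definitions of $f^x$, $D_i^\pm$, and based homotopy, and carry no content beyond careful indexing.
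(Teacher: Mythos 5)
Your proof is correct and follows essentially the same route as the paper: the identical column-by-column telescoping for (i), and the identical computation of $\partial_{n+1}\phi(h)$ for a based homotopy $h$ in (ii), with the side faces killed by degeneracy and the top/bottom faces producing $\phi(f)-\phi(g)$. (One trivial nitpick: when $n=1$ the leftover terms in (i) are $0$-cubes and hence cannot be degenerate, but both equal the constant $v_0$ cube and so still cancel.)
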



\begin{mydef}
Let $\psi:A_n(G)\to \hc_n(G)$ be the function given by $\psi([f])=\overline{\phi(f)}$.
\end{mydef}

\begin{prop}
The function $\psi:A_n(G)\to \hc_n(G)$ is a group homomorphism.
\begin{proof}
If $[f]$ and $[g]$ are any two elements of $A_n(G)$, and $p$ is defined as in \eqref{eq:concat}, then $\phi(p)-(\phi(f)+\phi(g))$ is a sum of constant $n$-cubes $Q^n\to v_0$, which are degenerate. Hence $\psi([f]\cdot [g])=\psi([p])=\psi([f])+\psi([g])$ in $\hc_n(G)$, as desired.
\end{proof}

\begin{remark}
If we regard $A_n$ (resp., $\hc_n$) as a functor from the category of connected simple graphs with graph maps to the category of groups (resp., abelian groups), then $\psi$ is a natural transformation from $A_n$ to $\hc_n$.
\end{remark}

\label{prop:groupmap}
\end{prop}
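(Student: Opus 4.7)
The goal is to verify the two group homomorphism properties: that $\psi$ sends the identity to $0$ and respects products. The identity element of $A_n(G)$ is represented by the constant map $I_m^n\to v_0$, all of whose $n$-cubes are constant maps $Q_n\to v_0$ and therefore degenerate, so $\phi$ of this representative lies in $D_n(G)$ and $\psi$ sends it to $0$.

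The substance is the product identity $\psi([f]\cdot[g])=\psi([f])+\psi([g])$. My plan is to pass to the $(I_m^n,\partial I_m^n)\to(G,v_0)$ representation of based homotopy classes introduced just before Lemma \ref{lem:hurmap}, choosing a common $m$ for $f$ and $g$ after enlarging if necessary. In that framework the product can be realized by the graph map $p:I_{2m,m,\ldots,m}\to G$ defined by
\[p(x)=\begin{cases} f(x_1,x_2,\ldots,x_n)&\text{if } x_1\leq m,\\ g(x_1-m,x_2,\ldots,x_n)&\text{if } x_1\geq m,\end{cases}\]
which is well defined because $f(m,\cdot)=v_0=g(0,\cdot)$, and which sends the boundary of its domain to $v_0$. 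A direct translation/padding argument, using only that adjoining a constant $v_0$ slab is a based homotopy, shows that $p$ represents the same class in $A_n(G)$ as the concatenation in \eqref{eq:concat}; by Lemma \ref{lem:hurmap}(ii) it therefore suffices to compute $\phi(p)$.

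With $p$ in this form the $n$-cubes split cleanly along the slab $\{x_1=m\}$. Partition the index set $I_{2m-1,m-1,\ldots,m-1}$ according to whether $x_1\leq m-1$ or $x_1\geq m$. In the first case $x+Q_n\subseteq\{0,\ldots,m\}\times\{0,\ldots,m\}^{n-1}$ so $p^x=f^x$, while in the second case the translation $x\mapsto x-m\epsilon_1$ gives $p^x=g^{x-m\epsilon_1}$. Summing the two ranges yields $\phi(p)=\phi(f)+\phi(g)$ exactly in $C_n(G)$, so
\[\psi([f]\cdot[g])=\overline{\phi(p)}=\overline{\phi(f)}+\overline{\phi(g)}=\psi([f])+\psi([g])\]
in $\hc_n(G)$.

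The main obstacle is the based-homotopy reconciliation between $p$ and the concatenation in \eqref{eq:concat}: the latter, read verbatim in $\Z^n$, can produce interface cubes whose two $x_1$-faces are unequal and which are not degenerate in any coordinate. Working in the $(I_m^n,\partial I_m^n)$ framework bypasses this issue by forcing a full $v_0$ slab at the join, after which the clean concatenation has no ``straddling'' $n$-cubes at all and the remainder of the argument is mechanical.
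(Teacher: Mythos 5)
Your proof is correct and follows essentially the same route as the paper: both decompose the $n$-cubes of the concatenation $p$ into the $n$-cubes of $f$ and translates of the $n$-cubes of $g$, the paper absorbing the leftover constant cubes into $D_n(G)$ while you normalize the grid so that $\phi(p)=\phi(f)+\phi(g)$ holds exactly in $C_n(G)$. The ``interface'' obstacle you flag is not actually present even in the verbatim formula \eqref{eq:concat}, since $f$ takes the value $v_0$ on the slab $x_1=r_f$ and $g(-r_g,x_2,\ldots,x_n)=v_0$, so the cube based at $x_1=r_f$ is exactly the translate of the cube of $g$ based at $y_1=-r_g$ and no straddling cube occurs; your grid normalization sidesteps the question in any case.
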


\subsection{Surjectivity in a special case}
\label{sec:surj}

Given any $n\geq 1$, we describe an infinite family of graphs $G$ for which the map $\psi:A_n(G)\to \hc_n(G)$ is surjective. From this we obtain an infinite class of $G$ for which $A_n(G)$ is nontrivial. Throughout the section, we let $G_1$ be a graph and
\begin{equation}
G_{n+1}=S_{n+3}G_n
\label{eq:gndef}
\end{equation}
for all $n\geq 1$, where $S_{n+3}$ is the suspension functor from Definition \ref{def:sus}. We think of $G_{n+1}$ as consisting of $n+2$ copies of $G_n$ and two extra vertices, called the \emph{north pole} and \emph{south pole}. We now state the main result of this section, which implies Theorem \ref{thm:mainthm2}.

\begin{thm}
If $G_1$ contains no 3- or 4-cycles, then the map
\[\psi :A_k(G_n)\to \hc_k(G_n)\]
is surjective for all $1\leq k\leq n$.
\label{thm:hursurj}
\end{thm}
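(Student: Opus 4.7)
The plan is to induct on $n$, reducing from $G_n$ to $G_{n-1}$ via a suspension argument. For the base $n=1$, only $k=1$ is in play and surjectivity is immediate from the discrete Hurewicz theorem in dimension one (Theorem~\ref{thm:hurmap1}); the same theorem handles $k=1$ for every $n$, so I may assume $n \geq 2$ and $2 \leq k \leq n$. The inductive step proceeds by constructing a suspension map $\Sigma_A \colon A_{k-1}(G_{n-1}) \to A_k(G_n)$ on the homotopy side and a suspension map $\Sigma_* \colon \hc_{k-1}(G_{n-1}) \to \hc_k(G_n)$ on the homology side, verifying that $\psi \circ \Sigma_A = \Sigma_* \circ \psi$, and checking that $\Sigma_*$ is surjective. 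Composing with the inductive hypothesis that $\psi \colon A_{k-1}(G_{n-1}) \to \hc_{k-1}(G_{n-1})$ is surjective then forces surjectivity of $\psi \colon A_k(G_n) \to \hc_k(G_n)$.

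To build $\Sigma_A$, I represent $[f] \in A_{k-1}(G_{n-1})$ by a graph map $f \colon (I_m^{k-1}, \partial I_m^{k-1}) \to (G_{n-1}, v_0)$, as discussed at the start of Section~\ref{sec:hurewicz}. Using the labeling of $G_n = S_{n+2} G_{n-1}$ (in which every $w_0$ equals the north pole $N$ and every $w_{n+2}$ equals the south pole $S$), I set
\[
(\Sigma_A f)(i, x) = f(x)_i \qquad \text{for } (i, x) \in I_{n+2} \times I_m^{k-1}.
\]
This is a graph map whose top and bottom slices ($i = 0, n+2$) are constant at the poles, but whose lateral faces $I_{n+2} \times \partial I_m^{k-1}$ map onto the axis $A = \{(v_0)_0, (v_0)_1, \ldots, (v_0)_{n+2}\}$ rather than a single vertex. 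Because $A$ is a path and hence a finite tree, it is contractible (Example~\ref{eg:acyclic}); using the shell-extension technique from the proof of Proposition~\ref{prop:contriv}, I can modify $\Sigma_A f$ on successively larger concentric shells outside the original domain to produce a based graph map $\widetilde{\Sigma_A f} \colon (I_{m'}^k, \partial I_{m'}^k) \to (G_n, N)$. A parallel argument applied to based homotopies in $G_{n-1}$ shows that $[\widetilde{\Sigma_A f}]$ depends only on $[f]$, so $\Sigma_A$ is a well-defined homomorphism.

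For $\Sigma_*$, I would define on each $(k{-}1)$-cube $\sigma \colon Q_{k-1} \to G_{n-1}$ and each $j = 0, \ldots, n+1$ the $k$-cube $\Sigma^j \sigma(e_1, e') = \sigma(e')_{j + e_1}$, and set $\Sigma_c \sigma = \sum_{j=0}^{n+1} \Sigma^j \sigma$. Extending linearly and checking that $\partial_k \Sigma_c = \pm \Sigma_c \partial_{k-1}$ modulo degenerate cubes yields $\Sigma_*$ on homology. A direct comparison shows that $\phi(\widetilde{\Sigma_A f})$ equals $\Sigma_c(\phi(f))$ modulo degenerate cubes and cubes supported on the extension shells; the shell cubes cancel or are degenerate, giving $\psi \circ \Sigma_A = \Sigma_* \circ \psi$. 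Finally, surjectivity of $\Sigma_*$ in the range $2 \leq k \leq n$ is essentially the content of Proposition~\ref{prop:homgen}(ii), which identifies $\hc_k(G_n)$ with the image of the suspension construction (and in particular makes $\hc_k(G_n)$ vanish for $k < n$, where surjectivity is automatic). Combining the three pieces completes the induction.

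The main obstacle is upgrading the naive candidate $\Sigma_A f$ to a genuine based map whose class is well defined on $A_{k-1}(G_{n-1})$. The boundary of the naive candidate lies on the axis of $G_n$, not at a single vertex, so a shell extension modeled on the proof of Proposition~\ref{prop:contriv} is needed, and a more intricate version of the same machinery is then required to show that based homotopies in $G_{n-1}$ suspend to based homotopies in $G_n$. By contrast, the homology-side construction, the chain-level commutativity with $\psi$, and the final deduction are essentially bookkeeping once $\Sigma_A$ is in hand.
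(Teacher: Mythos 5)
Your strategy is genuinely different from the paper's, and in outline it can be made to work. The paper never constructs a suspension homomorphism on homotopy groups: it instead builds explicit chain-level generators $\gamma_{n,i}$ of $\hc_n(G_n)$ by iterated suspension (Lemma \ref{lem:hursurj1}) and then, in Lemma \ref{lem:hursurj2}, writes down by hand two based graph maps $f_n,g_n$ on a carefully sized grid $L_n$ with $\phi(g_n)-\phi(f_n)=\phi(\gamma_n)$ on the nose; the ``wrapping'' map $f_n$ plays exactly the role of your shell extension along the contractible axis, but everything is done in one shot for the specific generators. Your version is more structural: it yields a map $\Sigma_A:A_{k-1}(G_{n-1})\to A_k(G_n)$ compatible with $\psi$, which is more than the theorem needs and is close in spirit to the Freudenthal-type sequence discussed in Section \ref{sec:openq}. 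The cost is proving well-definedness of $\Sigma_A$ on homotopy classes; that step is sound, since every slice $h_t$ of a based homotopy restricts to the constant $v_0$ on $\partial I_m^{k-1}$, so every slice of the suspended homotopy has the same boundary restriction $(i,x)\mapsto (v_0)_i$, and a single shell extension driven by a contraction of the axis path works uniformly in $t$. Note also that for surjectivity you never need $\Sigma_A$ to be a homomorphism, only a well-defined function, which sidesteps the issue of concatenating along the new coordinate.

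Two of your justifications do not hold as written and need repair. First, the cubes of $\widetilde{\Sigma_A f}$ supported on the extension shells are in general neither degenerate nor cancelling: a $k$-cube with values in a path graph need not be degenerate (for the path $p_0$--$p_1$--$p_2$, the $2$-cube sending $(0,0),(1,0),(0,1),(1,1)$ to $p_1,p_0,p_2,p_1$ has no two opposite faces equal). The correct argument is that $\phi(\widetilde{\Sigma_A f})-\Sigma_c(\phi(f))$ is a cycle all of whose cubes take values in the axis $A$; since $A$ is a finite path, hence contractible by Example \ref{eg:acyclic}, its positive-degree homology vanishes by \cite[Lemma 4.2]{barcelo2019-1} (invoked in the paper's own proof of Theorem \ref{thm:hursurj}), so this cycle bounds and the two classes agree in $\hc_k(G_n)$. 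Second, surjectivity of $\Sigma_*$ is not ``the content of'' Proposition \ref{prop:homgen}(ii): that proposition only says $\Delta$ is an isomorphism and says nothing about your $\Sigma_*$. You must additionally check $\Delta\circ\Sigma_*=\pm\operatorname{id}$, which is the telescoping computation the paper carries out for the specific classes $\phi(\gamma_n)$ in Lemma \ref{lem:hursurj1}(ii); run for an arbitrary cycle $z$ it gives $\partial_*(\overline{\Sigma_c z})=\pm\iota_*(\overline z)$ modulo degenerate cubes, which is what you need. With these two repairs the induction closes and your argument is complete.
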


\begin{cor}
If $G_1$ contains a cycle but does not contain any 3- or 4-cycles, then $A_n(G_n)$ is infinite for all $n\geq 1$.
\label{cor:hursurj}

\begin{proof}
This follows from Example \ref{eg:homsus}, Theorem \ref{thm:hursurj} and  Proposition \ref{prop:homgen}(ii) below.
\end{proof}
\end{cor}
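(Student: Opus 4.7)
The plan is to reduce the claim to the infinitude of $\hc_n(G_n)$, which is already established in the paper, and then pull that infinitude back through the Hurewicz-type map $\psi$. Every ingredient is in place, so the argument will be a short chain of implications.

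Let $k$ denote the number of edges of $G_1$ that lie outside a fixed spanning tree. Since $G_1$ contains a cycle, $k \geq 1$. First I would apply Example \ref{eg:homsus}, whose conclusion (deduced from the iterated suspension formula in Proposition \ref{prop:homgen}(ii)) is that
\[\hc_n(G_n)\cong \Z^k.\]
Because $k\geq 1$, this abelian group is infinite.

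Next I would invoke Theorem \ref{thm:hursurj} with the outer index equal to $n$: since $G_1$ contains no 3- or 4-cycles, the homomorphism $\psi : A_n(G_n)\to \hc_n(G_n)$ is surjective. Combining the two previous facts, the image of $\psi$ is infinite, so its domain $A_n(G_n)$ must be infinite as well.

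There is no significant obstacle here; by the time one reaches the corollary every ingredient is available. The only point requiring any attention is the translation of the hypothesis ``$G_1$ contains a cycle'' into the inequality $k\geq 1$, which is immediate from the elementary fact that in a connected graph the first Betti number equals the number of edges not contained in any spanning tree.
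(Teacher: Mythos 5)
Your argument is correct and is exactly the paper's intended proof: Example \ref{eg:homsus} (which rests on Proposition \ref{prop:homgen}(ii)) gives $\hc_n(G_n)\cong\Z^k$ with $k\geq 1$, and surjectivity of $\psi$ from Theorem \ref{thm:hursurj} then forces $A_n(G_n)$ to be infinite. No differences worth noting.
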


The proof of Theorem \ref{thm:hursurj} will proceed in several steps. We will focus on the case where $G_1$ is a cycle graph of length 5, i.e. $G_1=\Z_5$, and then extend the proof to the general case. Example \ref{eg:homsus} says that if $G_1=\Z_5$, then $\hc_n(G_n)\cong \Z$ for all $n\geq 1$. The following proposition, which applies to any choice of $G_1$, will allow us to identify an explicit generator of $\hc_n(G_n)$ when $G_1=\Z_5$. Part (ii) first appeared as \cite[Theorem 5.2]{barcelo2019}.

\begin{prop}
If $n\geq 2$, then
\begin{enumerate}[(i)]
\item $\hc_k(G_n)=0$ for $k=1,\ldots,n-1$
\item There is an explicit isomorphism $\Delta:\hc_n(G_n)\to \hc_{n-1}(G_{n-1})$.
\end{enumerate}

\begin{proof}
We first prove (ii), adapting our argument from the proof of \cite[Theorem 5.2]{barcelo2019}. Let $n\geq 2$. Let $X$ (resp., $Y$) be the graph obtained from $G_n$ by deleting the north (resp., south) pole. From the proof of \cite[Theorem 5.2]{barcelo2019} we have the following exact sequences for $k=1,\ldots,n$:
\begin{equation}
\hc_k(X)\oplus \hc_k(Y) \to \hc_k(G_n)\xrightarrow{\,\,\partial_*\,\,} \hc_{k-1}(X\cap Y) \to \hc_{k-1}(X)\oplus \hc_{k-1}(Y).
\label{eq:ses}
\end{equation}
The map $\partial_*$ can be described explicitly: if $\overline{z}\in \hc_k(G_n)$, then $z=x+y$ for some $x\in C_k(X)$ and $y\in C_k(Y)$, and $\partial_*(\overline{z}) = \overline{\partial_k(x)}$. Note that $G_{n-1}\times \{1\}$ is a deformation retract of $X\cap Y$ in the sense of \cite[Section 4]{barcelo2019-1}. Hence if we identify $G_{n-1}$ with $G_{n-1}\times \{1\}$, then the inclusion map $\iota: G_{n-1}\to X\cap Y$ induces an isomorphism $\iota_*:\hc_{n-1}(G_{n-1})\to \hc_{n-1}(X\cap Y)$. Let
\[\Delta:=(\iota_*)^{-1}\circ \partial_*.\]
Proposition \ref{prop:conecon} implies that $X$ and $Y$ are contractible, so $\hc_k(X)=\hc_k(Y)=0$ for all $k\geq 1$. The exact sequences \eqref{eq:ses} now become
\[0\to \hc_k(G_n)\xrightarrow{\,\,\Delta\,\,} \hc_{k-1}(G_{n-1}) \to \hc_{k-1}(X)\oplus \hc_{k-1}(Y).\]
For $k\geq 2$ we have $\hc_{k-1}(X)\oplus \hc_{k-1}(Y)=0$, so $\Delta$ is an isomorphism, proving (ii).

We now prove (i). If $\g=(V,E)$ is any connected graph, then $\hc_0(G) \cong \Z$ has presentation
\begin{equation}
\hc_0(G)=\langle v\in V\mid u=v \mbox{ for all } u,v\in V\rangle.
\label{eq:hc0}
\end{equation}
Any $x\in C_1(X)$ can be written as $x=\sum_{i=1}^\ell m_i f_i$ for integers $m_i$ and graph maps $f_i:Q_1\to X$. We have $\partial_1(x) = \sum_{i=1}^\ell m_i(f_i(1)-f_i(0))$, so $\overline{\partial_1(x)} = \sum_{i=1}^\ell m_i(f_i(1)-f_i(1)) = 0$ in $\hc_0(G_{n-1})$ by \eqref{eq:hc0}. Thus when $k=1$, we have $\operatorname{im}\Delta=0$. But also $\ker\Delta=0$ by exactness, so $\hc_1(G_n)=0$ for all $n\geq 2$. Hence if $k<n$, then by applying $(i)$ repeatedly we obtain an isomorphism $\hc_k(G_n)\to \hc_1 (G_{n-k+1})=0$, proving (i).
\end{proof}
\label{prop:homgen}
\end{prop}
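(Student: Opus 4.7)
The plan is to imitate the classical suspension isomorphism argument, using the fact that $G_n = S_{n+2}G_{n-1}$ can be covered by two ``hemispheres'' whose intersection deformation retracts onto $G_{n-1}$. I will prove (ii) first and then derive (i) from it by induction.

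For (ii), I would let $X$ be the subgraph of $G_n$ obtained by deleting the north pole and $Y$ the subgraph obtained by deleting the south pole. Each of $X$ and $Y$ is isomorphic to a discrete cone $C_{n+2}G_{n-1}$ in the sense of Section \ref{sec:conesus}, and hence is contractible by Proposition \ref{prop:conecon}. Contractibility implies $\hc_k(X) = \hc_k(Y) = 0$ for all $k \geq 1$ (the standard chain-homotopy argument from the companion paper \cite{barcelo2019}). The Mayer--Vietoris sequence of \cite{barcelo2019} for the cover $G_n = X \cup Y$ then collapses, for each $k$ in the relevant range, to
\[
0 \longrightarrow \hc_k(G_n) \xrightarrow{\ \partial_*\ } \hc_{k-1}(X \cap Y) \longrightarrow \hc_{k-1}(X) \oplus \hc_{k-1}(Y).
\]
Because $X \cap Y$ deformation retracts onto the middle slice $G_{n-1} \times \{1\}$ in the sense of \cite{barcelo2019-1}, the inclusion $\iota: G_{n-1} \hookrightarrow X \cap Y$ induces an isomorphism $\iota_*: \hc_{n-1}(G_{n-1}) \to \hc_{n-1}(X \cap Y)$. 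For $k \geq 2$ the rightmost term $\hc_{k-1}(X) \oplus \hc_{k-1}(Y)$ vanishes, so $\partial_*$ is an isomorphism and setting $\Delta := \iota_*^{-1} \circ \partial_*$ establishes (ii). I would also describe $\partial_*$ concretely: given $\overline{z} \in \hc_k(G_n)$, write $z = x + y$ with $x \in C_k(X)$ and $y \in C_k(Y)$, and then $\partial_*(\overline{z}) = \overline{\partial_k(x)}$.

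For (i), I would induct on $k$. The base case is $k = 1$. Here the same exact sequence reads
\[
0 \longrightarrow \hc_1(G_n) \xrightarrow{\ \partial_*\ } \hc_0(G_{n-1} \times \{1\}) \longrightarrow \hc_0(X) \oplus \hc_0(Y),
\]
and I would observe that any $1$-chain boundary $\partial_1(x) = \sum m_i(f_i(1) - f_i(0))$ becomes zero in $\hc_0(G_{n-1})$ because the presentation of $\hc_0$ of a connected graph identifies all vertices. Hence the image of $\partial_*$ is zero, and by injectivity $\hc_1(G_n) = 0$ for all $n \geq 2$. For the inductive step with $1 < k < n$, iterate the isomorphism from (ii) to obtain
\[
\hc_k(G_n) \;\cong\; \hc_{k-1}(G_{n-1}) \;\cong\; \cdots \;\cong\; \hc_1(G_{n-k+1}) \;=\; 0,
\]
the last equality being the base case.

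The only nontrivial ingredient is the Mayer--Vietoris sequence and the deformation-retract identification $X \cap Y \simeq G_{n-1}$, both of which are imported wholesale from \cite{barcelo2019, barcelo2019-1}; once these are in hand, the argument is largely bookkeeping. The main conceptual obstacle is making sure that the chosen hemispheres $X$ and $Y$ really are discrete cones on $G_{n-1}$ (and not merely truncations of $G_n \times I_{n+2}$), which follows by inspecting how the suspension identifies the top and bottom slices of $G_{n-1} \times I_{n+2}$; with this verified, Proposition \ref{prop:conecon} provides the contractibility that drives the entire argument.
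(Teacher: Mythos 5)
Your proposal is correct and follows essentially the same route as the paper: the same hemisphere decomposition $X,Y$ obtained by deleting the poles, the same Mayer--Vietoris sequence imported from \cite{barcelo2019}, the same identification of $X\cap Y$ with $G_{n-1}$ via deformation retraction, the same definition $\Delta=\iota_*^{-1}\circ\partial_*$, and the same treatment of (i) via the presentation of $\hc_0$ of a connected graph followed by iterating the isomorphism from (ii). The only cosmetic difference is that you make explicit the observation that $X$ and $Y$ are cones $C_{n+2}G_{n-1}$, which the paper leaves implicit when citing Proposition \ref{prop:conecon}.
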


We will need a labeling scheme for the vertices of $G_n$. The graph $G_{n+1}$ consists of $n+2$ isomorphic copies of $G_n$ suspended between the north and south poles. For each $i=0,\ldots,n+3$ we obtain a graph map $\iota_n^i : G_n\to G_{n+1}$ taking each $v\in G_n$ to the corresponding vertex in the $i$th copy of $G_n$. Thus $\iota_n^0$ and $\iota_n^{n+3}$ are the constant graph maps taking $G_n$ to the north and south poles of $G_{n+1}$, respectively. For $v\in G_n$, we write $v_i = \iota_n^i(v)$ for all $i$, and we say that $v_i$ is obtained from $v$ by \emph{adding the subscript} $i$. In general, if $v$ is a vertex of $G_k$ for $k< n$, then we write
\begin{equation}
v_{i_k,\ldots,i_n} = \iota_n^{i_n}\cdots \iota_k^{i_k} v.
\label{eq:addsub2}
\end{equation}
For example, $v_{i_k,\ldots,i_n}$ is the south pole of $G_{n+1}$ if and only if $i_n=0$.

If $G$ is a graph and $L=I_{m_1,\ldots,m_n}$ is a grid graph, then we will think of a map $L\to G$ as an $n$-dimensional array of size $(m_1+1)\times\cdots\times(m_n+1)$, whose entries are vertices of $G$. For example, when $n=1$, a map $L\to G$ is a single column with $m_1+1$ entries. When $n=2$, we get an $(m_1+1)\times (m_2+1)$ matrix. In both cases, we consider the top-left entry to be the image of the origin $(0,\ldots,0)\in L$.

Suppose that $G_1=\Z_5$. Our task now is to construct a generator of $\hc_n(G_n)\cong \Z$ inductively. Define grid graphs $J_n$ by $J_1=I_5$ and
\begin{equation}
J_{n+1}=J_n\times I_{n+3}
\label{eq:jn}
\end{equation}
for all $n\geq 1$. Label the vertices of $G_1$ as in Figure \ref{fig:cone}. As just described, we will think of maps $J_n\to G$ as $n$-dimensional arrays of appropriate size. Let $\gamma_1:J_1\to G_1$ be given by
\begin{equation}
\gamma_1 = \begin{pmatrix} a & b & c & d & e & a\end{pmatrix}^T,
\end{equation}
where $T$ denotes the usual transpose. For $n\geq 1$ let $\gamma_{n+1}:J_{n+1}\to G_{n+1}$ be given by
\begin{equation}
\gamma_{n+1}(v,i) = \gamma_n(v)_i
\end{equation}
for all $(v,i)\in J_n\times I_{n+3}$, where we have added the subscript $i$ to $\gamma_n(v)$ as defined in \eqref{eq:addsub2}. For example, $\gamma_2:I_5\times I_4\to G$ is given by
\[
\gamma_2=\begin{pmatrix}
a_0 & a_0 & a_0 & a_0 & a_0 & a_0 \\
a_1 & b_1 & c_1 & d_1 & e_1 & a_1 \\
a_2 & b_2 & c_2 & d_2 & e_2 & a_2 \\
a_3 & b_3 & c_3 & d_3 & e_3 & a_3 \\
a_4 & a_4 & a_4 & a_4 & a_4 & a_4 
\end{pmatrix}^T.\]
It is routine to verify that $\gamma_n:J_n\to G_n$ is a graph map for all $n$.

We record some basic properties of $\gamma_n$. First, since $g_1(0)=g_1(5)$, we have
\begin{equation}
\gamma_n(0,v_2,\ldots,v_n)=\gamma_n(5,v_2,\ldots,v_n)
\label{eq:gflat05}
\end{equation}
for all $v\in J_n$. We also have
\begin{equation}
\gamma_n(v_1,\ldots,v_{i-1},0,v_{i+1},\ldots,v_n) = \gamma_n(0,\ldots,0,v_{i+1},\ldots,v_n)
\label{eq:gflat}
\end{equation}
for all $i=2,\ldots,n$, since adding the subscript 0 to any vertex of $G_{i-1}$ gives the south pole of $G_i$. Similarly, adding the subscript $i+2$ to any vertex of $G_{i-1}$ gives the north pole of $G_i$, so
\begin{equation}
\gamma_n(v_1,\ldots,v_{i-1},i+2,v_{i+1},\ldots,v_n) = \gamma_n(0,\ldots,0,i+2,v_{i+1},\ldots,v_n).
\label{eq:gflat2}
\end{equation}
Finally, note that $\partial_n \phi(\gamma_n) = 0$ by a telescoping argument, so we can consider the homology class $\overline{\phi(\gamma_n)}$. 

\begin{lemma}
If $G_1=\Z_5$ and $\gamma_n$ is defined as above for $n\geq 1$, then
\begin{enumerate}[(i)]
\item $\hc_n(G_n)=\langle \overline{\phi(\gamma_n)}\rangle$
\item $\Delta(\overline{\phi(\gamma_{n+1})}) = (-1)^n\overline{\phi(\gamma_n)}$, where $\Delta:\hc_{n+1}(G_{n+1})\to \hc_n(G_n)$ is the isomorphism from Proposition \ref{prop:homgen}(ii).
\end{enumerate}

\begin{proof}
We prove (ii). Example \ref{eg:homsus} implies (i) for $n=1$, so the general case of (i) will follow from (ii). Suppose that $n\geq 2$. We retain the definitions of $X$, $Y$, $\partial_*$, $\iota$ and $\iota_*$ from the proof of Proposition \ref{prop:homgen}. Thus $X$ (resp., $Y$) is obtained from $G_n$ by removing the north (resp., south) pole. We can write $\phi(\gamma_n)=x+y$, where $x\in C_n(X)$ and $y\in C_n(Y)$. In particular, if $U=\{u\in J_n : 1\leq u_n\leq n+1\}$, then we can take
\[y=\sum_{u\in U} \gamma_n^u.\]

Let $V=\{v\in J_n : v_n=1\}$ and $W=\{w\in J_n: w_n=n+1\}$. The only terms in the sum $\partial_n(y)$ that do not cancel by telescoping are
\[\partial_n(y)=(-1)^{n}\left(\sum_{v\in V} D_n^- \gamma_n^v - \sum_{w\in W} D_n^+ \gamma_n^w\right).\]
By construction, each $D_n^+ \gamma_n^w$ is the constant map $Q^{n-1}\to a_{n+2,\ldots,n+2}$. This is a degenerate $(n-1)$-cube of $G_n$, so we can ignore it. We are left with the sum
\[\partial_n(y)=(-1)^n \sum_{v\in V} D_n^- \gamma_n^v,\]
but this is precisely $\phi(\iota\circ \gamma_{n-1})$. Hence
\[\partial_*(\overline{\phi(\gamma_n)}) =\overline{\partial_n(x)}
= -\overline{\partial_n(y)}
= (-1)^{n-1} \overline{\phi(\iota\circ \gamma_{n-1})}
 = (-1)^{n-1} (\iota_*(\overline{\phi(\gamma_{n-1})})),\]
from which we deduce that $\Delta(\overline{\phi(\gamma_n)}) = (\iota_*)^{-1}(\partial_*(\overline{\phi(\gamma_n)})) = (-1)^{n-1} \overline{\phi(\gamma_{n-1})}$, proving (ii).
\end{proof}
\label{lem:hursurj1}
\end{lemma}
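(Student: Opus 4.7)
The plan is to prove (ii) by an explicit Mayer--Vietoris calculation, then deduce (i) by induction. For (i), the base case $n=1$ is Example \ref{eg:homsus}, which identifies $\hc_1(G_1)\cong\Z$ and shows that $\overline{\phi(\gamma_1)}$ (traversing the pentagon once) is a generator. Granted (ii), the fact that $\Delta$ is an isomorphism (Proposition \ref{prop:homgen}(ii)) propagates this to all $n$: a generator is sent to $\pm$ a generator, so $\hc_n(G_n)=\langle\overline{\phi(\gamma_n)}\rangle$ inductively.

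The real work is (ii). Working with the indexing $\Delta:\hc_n(G_n)\to\hc_{n-1}(G_{n-1})$ for $n\geq 2$, let $X,Y\subseteq G_n$ be obtained by deleting the north and south pole respectively, so that $X\cap Y$ deformation retracts onto $\iota(G_{n-1})$ and $\Delta=(\iota_*)^{-1}\circ\partial_*$. The natural splitting $\phi(\gamma_n)=x+y$ is obtained by setting $y=\sum_{u\in U}\gamma_n^u$ with $U=\{u\in J_n:1\leq u_n\leq n+1\}$; these are the $n$-cubes that avoid the south-pole slice, while the remaining cubes (those with $u_n=0$) avoid the north-pole slice. Since $\phi(\gamma_n)$ is a cycle, $\partial_n(x)=-\partial_n(y)$, so it suffices to analyze $\partial_n(y)$.

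Expanding $\partial_n(y)=\sum_{u\in U}\sum_{i=1}^n(-1)^i(D_i^-\gamma_n^u-D_i^+\gamma_n^u)$, I would argue direction by direction. In direction $1$, the boundary contributions at the slices $v_1=0$ and $v_1=5$ cancel pairwise because of the identification \eqref{eq:gflat05}. In each direction $i=2,\dots,n-1$, the extreme-slice relations \eqref{eq:gflat} and \eqref{eq:gflat2} say that $\gamma_n$ is constant on $v_i=0$ and on $v_i=i+2$; the surviving $D_i^\pm$ terms there are thus constant $(n-1)$-cubes and hence degenerate in $C_{n-1}(G_n)$. Only the direction-$n$ contribution remains, namely
\[
\partial_n(y)=(-1)^n\!\left(\sum_{v_n=1}D_n^-\gamma_n^v\;-\;\sum_{w_n=n+1}D_n^+\gamma_n^w\right).
\]
By \eqref{eq:gflat2}, each $D_n^+\gamma_n^w$ is the constant cube at the north pole and may be discarded, while unwinding the recursive definition of $\gamma_n$ shows that $\sum_{v_n=1}D_n^-\gamma_n^v$ is exactly $\phi(\iota\circ\gamma_{n-1})$. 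Hence $\partial_*(\overline{\phi(\gamma_n)})=\overline{\partial_n(x)}=-\overline{\partial_n(y)}=(-1)^{n-1}\iota_*(\overline{\phi(\gamma_{n-1})})$, and applying $(\iota_*)^{-1}$ gives $\Delta(\overline{\phi(\gamma_n)})=(-1)^{n-1}\overline{\phi(\gamma_{n-1})}$, which is (ii) after reindexing.

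The main obstacle is the bookkeeping in the telescoping step: one must carefully pair up cancellations index by index and verify, using \eqref{eq:gflat05}, \eqref{eq:gflat}, and \eqref{eq:gflat2}, that each of the first $n-1$ coordinate directions contributes either a telescoping cancellation or only degenerate cubes. A secondary subtlety is confirming that the surviving sum in direction $n$ matches $\phi(\iota\circ\gamma_{n-1})$ exactly rather than merely up to degeneracy, which requires tracking how the subscript operation \eqref{eq:addsub2} from the suspension construction interacts with the cubical coordinates of $J_n=J_{n-1}\times I_{n+2}$.
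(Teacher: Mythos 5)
Your proposal follows essentially the same route as the paper: the same Mayer--Vietoris splitting $\phi(\gamma_n)=x+y$ with $y=\sum_{u\in U}\gamma_n^u$, the same telescoping analysis isolating the direction-$n$ boundary terms (using \eqref{eq:gflat05}, \eqref{eq:gflat}, \eqref{eq:gflat2} to dispose of the other directions), the same identification of the surviving sum with $\phi(\iota\circ\gamma_{n-1})$, and the same sign bookkeeping; in fact you spell out the cancellation in directions $1,\dots,n-1$ in slightly more detail than the paper does. The argument is correct, modulo the minor point that the leftover faces in directions $2,\dots,n-1$ are degenerate (constant in the first coordinate) rather than literally constant.
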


\begin{lemma}
If $G_1=\Z_5$, then the image of $\psi:A_n(G_n)\to \hc_n(G_n)$ contains $\overline{\phi(\gamma_n)}$.
\label{lem:hursurj2}
\end{lemma}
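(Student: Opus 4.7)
The plan is to induct on $n\geq 1$, constructing for each $n$ a graph map $f_n:(\imn,\bimn)\to(G_n,v_0)$ with $\psi([f_n])=\pm\overline{\phi(\gamma_n)}$. Since $\psi$ is a homomorphism by Proposition~\ref{prop:groupmap}, producing any preimage of $\pm\overline{\phi(\gamma_n)}$ is enough; I take $v_0=a_{0,\ldots,0}$, the south pole of $G_n$. The base case $n=1$ is handled by $f_1=\gamma_1$: labeling $\Z_5$ by $a,b,c,d,e$ as in Figure~\ref{fig:cone}, one has $\gamma_1(0)=\gamma_1(5)=a$, so $\gamma_1$ is already a graph map $(I_5,\partial I_5)\to(G_1,a)$ with $\phi(f_1)=\phi(\gamma_1)$.

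For the inductive step, given $f_n$, first define an intermediate map $h:\imn\times I_{n+3}\to G_{n+1}$ by $h(v,t)=f_n(v)_t$, using \eqref{eq:addsub2}; this is a graph map because each $\iota_n^t$ is, and $\iota_n^t(w)\simeq\iota_n^{t+1}(w)$ in $G_{n+1}$ for every $w\in G_n$. By construction $h$ is constant on the faces $t=0$ and $t=n+3$ (at the south pole $v_0'$ and north pole $w_0'$ of $G_{n+1}$), and on the spatial boundary $v\in\bimn$ it takes values on the axis path $\alpha=\{a_{0,\ldots,0,t}:0\leq t\leq n+3\}$ joining $v_0'$ to $w_0'$. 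To promote $h$ to a map with boundary identically $v_0'$, append $n+3$ further $t$-layers running $\alpha$ in reverse by setting $\tilde h(v,t)=a_{0,\ldots,0,2(n+3)-t}$ for $n+3\leq t\leq 2(n+3)$, which closes up the top face; then use a contraction of the tree $\alpha$ onto $v_0'$ (which exists by Example~\ref{eg:acyclic}) to build, one step at a time, a spatial collar of $n+3$ slabs in each $v$-direction, successively flattening the boundary to $v_0'$. Padding with constant $v_0'$-layers extends the domain to a cube $I_{m'}^{n+1}$ without changing $\phi$, yielding $f_{n+1}$.

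It remains to compare $\overline{\phi(f_{n+1})}$ with $\overline{\phi(\gamma_{n+1})}$ in $\hc_{n+1}(G_{n+1})$. Write $\phi(h)=\sum_{t=0}^{n+2}s_t(\phi(f_n))$, where $s_t$ is the suspension operator sending a $k$-cube $\sigma:Q_k\to G_n$ to the $(k+1)$-cube $(y,r)\mapsto\sigma(y)_{t+r}$ in $G_{n+1}$; the analogous identity $\phi(\gamma_{n+1})=\sum_{t=0}^{n+2}s_t(\phi(\gamma_n))$ is immediate from $\gamma_{n+1}(v,t)=\gamma_n(v)_t$. The inductive hypothesis gives $\phi(f_n)-\phi(\gamma_n)=\partial_{n+1}(c)$ for some $c\in C_{n+1}(G_n)$, and $s_t$ intertwines $\partial$ with $\partial$ up to terms on the top and bottom faces of each suspended cube that telescope when summed over $t\in\{0,\ldots,n+2\}$, giving $\overline{\phi(h)}=\pm\overline{\phi(\gamma_{n+1})}$. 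The $(n+1)$-cubes contributed by the reverse cap and the spatial collar are contained in the cones $X_{n+1},Y_{n+1}\cong C_{n+2}G_n$ and in the tree $\alpha$, all contractible by Proposition~\ref{prop:conecon} and Example~\ref{eg:acyclic}; by the argument of Proposition~\ref{prop:contriv} their total contribution to $\phi(f_{n+1})-\phi(h)$ is a boundary in $C_{n+1}(G_{n+1})$. The main obstacle is the bookkeeping for the collar construction and for the telescoping identity—each is a finite combinatorial check, but they must be orchestrated so that the excess cubes assemble precisely into a chain boundary rather than leaving behind a stray homology class.
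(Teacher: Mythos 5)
Your strategy is genuinely different from the paper's. The paper does not induct: for each $n$ it writes down, completely explicitly, two based maps $f_n,g_n:(L_n,\partial L_n)\to(G_n,a_{0,\ldots,0})$ on a single grid $L_n$ that agree outside a sub-box $M_n$, with $g_n$ equal to a translate of $\gamma_n$ on $M_n$ and $f_n$ independent of the first coordinate there. The identity $\phi(g_n)-\phi(f_n)=\phi(\gamma_n)$ then holds at the chain level modulo degenerate cubes, so $\psi([g_n]-[f_n])=\overline{\phi(\gamma_n)}$ with no homological input whatsoever. Your inductive suspension $h(v,t)=f_n(v)_t$ buys conceptual clarity (it mirrors the isomorphism $\Delta$ of Lemma~\ref{lem:hursurj1}) at the cost of two nontrivial verifications that the paper's construction simply avoids: turning $h$ into a genuinely based map, and controlling the resulting error term in homology.

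On those two points your write-up has real gaps. First, the cap-and-collar step is where most of the work lives, and it is only sketched; it can be done (the side-boundary values of $h\cup\tilde h$ lie on the path $\alpha$, and one flattens them using a contraction of $\alpha$ much as in the outward half of the proof of Proposition~\ref{prop:contriv}), but as written ``successively flattening the boundary'' is an assertion rather than a construction, and the compatibility of the collar at corners and with the constant top and bottom faces must be checked. Second, your justification for discarding the excess cubes is off: Proposition~\ref{prop:contriv} is a statement about $A_n$, not about $\hc_n$, and the cones $X_{n+1},Y_{n+1}$ are irrelevant here, since all of the cap and collar cubes take values in the path $\alpha$ alone. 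What you actually need is (a) that $\phi(f_{n+1})-\phi(h)$ is a cycle --- this holds because $\phi(h)$ is itself a cycle (the face terms in the $v$-directions cancel in pairs since $h$ is constant in $v$ on $\bimn\times I_{n+3}$, and the face terms in the $t$-direction are constant cubes, hence degenerate), while $\phi(f_{n+1})$ is a cycle by Lemma~\ref{lem:hurmap}(i) --- and (b) that a cycle supported on the contractible graph $\alpha$ bounds, which is \cite[Lemma 4.2]{barcelo2019-1} (already invoked in the proof of Theorem~\ref{thm:hursurj}), not Proposition~\ref{prop:contriv}. Your telescoping identity $\partial(s_t\sigma)=s_t(\partial\sigma)\pm(\iota^t_*\sigma-\iota^{t+1}_*\sigma)$ and the resulting conclusion $\overline{\phi(h)}=\overline{\phi(\gamma_{n+1})}$ are correct. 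With these repairs the induction goes through, but the paper's direct construction is shorter and entirely elementary.
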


We illustrate the proof of Lemma \ref{lem:hursurj2} for $n=2$ before arguing the general case. Let $L_2 = I_{13,8}$, and let $M_2=(4,0)+I_{5,4}\subseteq L_2$. Let $f_2:(L_2,\partial L_2)\to (G_2,a_0)$ be the graph map in Figure \ref{fig:c2}. Here $f_2$ is constant on every outlined region.

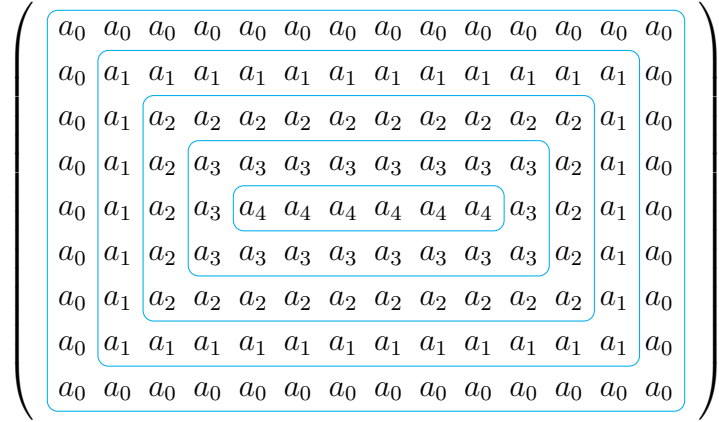
\begin{figure}[ht]
\centering
\begin{tikzpicture}
\matrix[matrix of math nodes, column sep = {6mm,between origins},row sep = {6mm,between origins}, left delimiter = {(}, right delimiter={)}](m){%
a_0 & a_0 & a_0 & a_0 & a_0 & a_0 & a_0 & a_0 & a_0 & a_0 & a_0 & a_0 & a_0 & a_0\\
a_0 & a_1 & a_1 & a_1 & a_1 & a_1 & a_1 & a_1 & a_1 & a_1 & a_1 & a_1 & a_1 & a_0\\
a_0 & a_1 & a_2 & a_2 & a_2 & a_2 & a_2 & a_2 & a_2 & a_2 & a_2 & a_2 & a_1 & a_0\\
a_0 & a_1 & a_2 & a_3 & a_3 & a_3 & a_3 & a_3 & a_3 & a_3 & a_3 & a_2 & a_1 & a_0\\
a_0 & a_1 & a_2 & a_3 & a_4 & a_4 & a_4 & a_4 & a_4 & a_4 & a_3 & a_2 & a_1 & a_0\\
a_0 & a_1 & a_2 & a_3 & a_3 & a_3 & a_3 & a_3 & a_3 & a_3 & a_3 & a_2 & a_1 & a_0\\
a_0 & a_1 & a_2 & a_2 & a_2 & a_2 & a_2 & a_2 & a_2 & a_2 & a_2 & a_2 & a_1 & a_0\\
a_0 & a_1 & a_1 & a_1 & a_1 & a_1 & a_1 & a_1 & a_1 & a_1 & a_1 & a_1 & a_1 & a_0\\
a_0 & a_0 & a_0 & a_0 & a_0 & a_0 & a_0 & a_0 & a_0 & a_0 & a_0 & a_0 & a_0 & a_0\\
};
  \draw[cyan,rounded corners] (m-1-1.north west) rectangle (m-9-14.south east);
  \draw[cyan,rounded corners] (m-1-1.south east) rectangle (m-8-13.south east);
  \draw[cyan,rounded corners] (m-2-2.south east) rectangle (m-7-12.south east);
  \draw[cyan,rounded corners] (m-3-3.south east) rectangle (m-6-11.south east);
  \draw[cyan,rounded corners] (m-4-4.south east) rectangle (m-5-10.south east);
\end{tikzpicture}
\caption{The transpose of $f_2$ with its fibers $B_{2,k}$ outlined in blue.}
\label{fig:c2}
\end{figure}

Let $g_2:(L_2,\partial L_2)\to (G_2,a_0)$ be the graph map in Figure \ref{fig:wrapping}. Here $M_2$ is the region outlined in red. Note that $f_2(x)=g_2(x)$ for all $x\in L_2\setminus M_2^\circ$. Hence if $\Omega_2=\{x\in \Z^2:x+Q_2\subseteq M_2\}$, then
\[\phi(g_2) - \phi(f_2) = \sum_{x\in \Omega_2} g_2^x - \sum_{x\in \Omega_2} f_2^x.\]
The first sum on the right hand side is easily seen to be $\phi(\gamma_2)$ by inspection. Each term in the second sum is a degenerate $2$-cube of $f_2$. Hence $\phi(g_2)-\phi(f_2)=\phi(\gamma_2)$, proving that
\[\psi([g_2]-[f_2])=\overline{\phi(\gamma_2)},\]
as desired.

\begin{figure}[ht]
\centering
\begin{tikzpicture}[scale=0.9]
\matrix[matrix of math nodes, column sep = {6mm,between origins},row sep = {6mm,between origins}, left delimiter = {(}, right delimiter={)}](m){%
{a_0} & a_0 & a_0 & a_0 & a_0 & a_0 & a_0 & a_0 & a_0 & a_0 & a_0 & a_0 & a_0 & {a_0}\\
a_0 & {a_1} & a_1 & a_1 & a_1 & b_1 & c_1 & d_1 & e_1 & a_1 & a_1 & a_1 & {a_1} & a_0\\
a_0 & a_1 & {a_2} & a_2 & a_2 & b_2 & c_2 & d_2 & e_2 & a_2 & a_2 & {a_2} & a_1 & a_0\\
a_0 & a_1 & a_2 & {a_3} & a_3 & b_3 & c_3 & d_3 & e_3 & a_3 & {a_3} & a_2 & a_1 & a_0\\
a_0 & a_1 & a_2 & a_3 & a_4 & a_4 & a_4 & a_4 & a_4 & a_4 & a_3 & a_2 & a_1 & a_0\\
a_0 & a_1 & a_2 & {a_3} & a_3 & a_3 & a_3 & a_3 & a_3 & a_3 & {a_3} & a_2 & a_1 & a_0\\
a_0 & a_1 & {a_2} & a_2 & a_2 & a_2 & a_2 & a_2 & a_2 & a_2 & a_2 & {a_2} & a_1 & a_0\\
a_0 & {a_1} & a_1 & a_1 & a_1 & a_1 & a_1 & a_1 & a_1 & a_1 & a_1 & a_1 & {a_1} & a_0\\
{a_0} & a_0 & a_0 & a_0 & a_0 & a_0 & a_0 & a_0 & a_0 & a_0 & a_0 & a_0 & a_0 & {a_0}\\
};
\def\magentavert{0.137}
\def\r{0.137}
\def\lastvert{0}
  \draw[cyan,rounded corners] (m-3-4.south east) -- (m-3-3.south east) --
    ($(m-7-3.north east)+(0,\r)$) -- ($(m-7-12.north west)+(0,\r)$) -- (m-3-12.south west) -- 
    (m-3-11.south west) -- ($(m-6-11.north west)+(0,\magentavert)$) --
    ($(m-6-4.north east)+(0,\magentavert)$) -- cycle;
  \draw[cyan,rounded corners] (m-2-4.south east) -- (m-2-2.south east) --
    ($(m-8-2.north east)+(0,\r)$) -- ($(m-8-13.north west)+(0,\r)$) -- (m-2-13.south west) -- 
    (m-2-11.south west) -- (m-3-11.south west) -- (m-3-12.south west) -- 
    ($(m-7-12.north west)+(0,\magentavert)$) -- ($(m-7-3.north east)+(0,\magentavert)$) --
    (m-3-3.south east) -- (m-3-4.south east) -- cycle;
  \draw[cyan,rounded corners] (m-1-4.south east) -- (m-1-1.south east) --
    ($(m-9-1.north east)+(0,\r)$) -- ($(m-9-14.north west)+(0,\r)$) -- (m-1-14.south west) -- 
    (m-1-11.south west) -- (m-2-11.south west) -- (m-2-13.south west) -- 
    ($(m-8-13.north west)+(0,\magentavert)$) -- ($(m-8-2.north east)+(0,\magentavert)$) --
    (m-2-2.south east) -- (m-2-4.south east) -- cycle;
  \draw[cyan,rounded corners] ($(m-1-4.north east)+(0,\lastvert)$) -- ($(m-1-1.north west)+(0,\lastvert)$) --
    (m-9-1.south west) -- (m-9-14.south east) -- ($(m-1-14.north east)+(0,\lastvert)$) --
    ($(m-1-11.north west)+(0,\lastvert)$) -- (m-1-11.south west) -- (m-1-14.south west) -- 
    ($(m-9-14.north west)+(0,\magentavert)$) -- ($(m-9-1.north east)+(0,\magentavert)$) --
    (m-1-1.south east) -- (m-1-4.south east) -- cycle;
   \draw[thick,magenta,rounded corners] (m-1-4.north east) rectangle ($(m-6-11.north west)+(0,\r)$);
\end{tikzpicture}
\caption{The transpose of $g_2$ with $M_2$ outlined in red and each $B_{2,k}\setminus M_2$ outlined in blue.}
\label{fig:wrapping}
\end{figure}
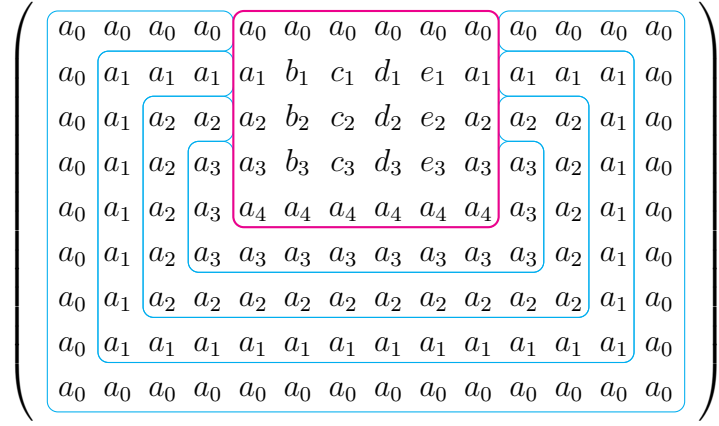

\begin{proof}[Proof of Lemma \ref{lem:hursurj2}]
The case $n=1$ is trivial; simply note that
\[\psi([\gamma_1])=\overline{\phi(\gamma_1)}.\]
Suppose now that $n\geq 2$. We will construct an $n$-dimensinal grid graph $L_n$ and graph maps
\[f_n,g_n:(L_n,\partial L_n)\to (G_n,a_{0,\ldots,0})\]
such that
\begin{equation}
\psi([g_n]-[f_n]) = \overline{\phi(\gamma_n)},
\label{eq:psigf}
\end{equation}
proving the result.

For $n\geq 2$ let $L_n = I_{m_1,\ldots,m_n}$, where
\[m_i = \begin{cases}(n-i+1)(n+i+4) - 1&\mbox{if }i=1\\(n-i+1)(n+i+4)&\mbox{if }i>1.\end{cases} \]
For example, the first several $L_n$ are
\begin{align*}
L_2&=I_{13}\times I_8\\
L_3&=I_{23}\times I_{18}\times I_{10}\\
L_4&=I_{35}\times I_{30}\times I_{22}\times I_{12}.
\end{align*}
Also let
\[M_n = c+J_n\subseteq L_n,\]
where $c=(n+2,\ldots,n+2,0)\in \Z^n$ and $J_n$ is defined as in \eqref{eq:jn}. Let $L_{n,0}=\partial L_n$, and for $k=1,\ldots,n+2$ let $L_{n,k}=L_{n,k-1}^\circ$. For each $k=0,\ldots,n+2$, let $B_{n,k}=\partial L_{n,k}$. For example, the graphs $B_{2,k}$ are outlined in Figure \ref{fig:c2}, and $M_2$ is the large outlined region in Figure \ref{fig:wrapping}. Note that the $B_{n,k}$ partition $L_n$ in a particular way; there is a ``central'' grid graph $B_{n,n+2}$ induced by all vertices of $M_n$ with $n$th coordinate $n+2$, and $B_{n,k}$ is ``wrapped around'' $B_{n,k+1}$ for $k=0,\ldots,n+1$.

Suppose that $x\in B_{n,k}$. Let $f_n:(L_n,\partial L_n)\to (G_n,a_{0,\ldots,0})$ be given by
\[f_n(x)=\gamma_n(0,x_2^*,x_3^*,\ldots,x_{n-1}^*,k),\]
where for $1<i<n$ we set
\[x_i^* = \begin{cases} x_i-(n+2)&\mbox{if } n+2\leq x_i\leq n+i+4\\ 0&\mbox{if }x_i\leq n+2\\ i+2 &\mbox{if } x_i\geq n+i+4.\end{cases}\]
For example, $f_2$ is illustrated in Figure \ref{fig:c2}.

We claim that $f_n$ is a graph map. Let $x,y\in L_n$ with $x\simeq y$. We must show that $f_n(x)\simeq f_n(x+y)$. If $x=y$, then this is immediate. Suppose that $x\neq y$ with $x\in B_{n,k}$. The vertices $x$ and $y$ differ in exactly one coordinate; call it $j$. We must have either $y\in B_{n,k}$ or $y\in B_{n,k\pm 1}$. If $j\in \{1,n\}$ and $y\in B_{n,k}$, then $f_n(x)=f_n(y)$ by definition. If $j\in \{1,n\}$ and $y\in B_{n,k\pm 1}$, then
\begin{equation*}
f_n(x) = \gamma_n(0,x_2^*,\ldots,x_{n-1}^*,k) \simeq \gamma_n(0,x_2^*,\ldots,x_{n-1}^*,k\pm 1) = f_n(y).
\label{eq:vw}
\end{equation*}
If $1<j<n$ and $y\in B_{n,k}$, then $|x_j^*-y_j^*|\leq 1$, so
\begin{equation}f_n(x)=\gamma_n(0,x_2^*,\ldots,x_{n-1}^*,k)\simeq \gamma_n(0,x_2^*,\ldots,x_{j-1}^*,y_j^*,x_{j+1}^*,\ldots,x_{n-1}^*,k)=f_n(y).
\label{eq:xtoy}
\end{equation}
If $1<j<n$ and $y\in B_{n,k\pm 1}$, then either $x_j,y_j\leq n+2$ or $x_j,y_j\geq n+j+4$, so $x_j^*=y_j^*$ and \eqref{eq:xtoy} holds again. Thus in any case we have $f_n(x)\simeq f_n(y)$, so $f_n$ is a graph map.

We claim that the following defines a function $g_n:(L_n,\partial L_n)\to (G_n,a_{0,\ldots,0})$:
\[g_n(x) = \begin{cases} \gamma_n(x - c)&\mbox{if }x\in M_n\\ f_n(x)&\mbox{if }x\in L_n\setminus M_n^\circ.\end{cases}\]
For example, $g_2$ is illustrated in Figure \ref{fig:wrapping} with $M_2$ boxed in black and $B_{2,k}\setminus M_2$ boxed in orange, green, blue and red for $k=0,\ldots,3$, respectively. To prove that $g_n$ is well defined in general, we must show that
\begin{equation}
\gamma_n(x-c)=f_n(x)
\label{eq:gammaf}
\end{equation}
for all $x\in \partial M_n$. It will follow that $g_n$ is a graph map, since $\gamma$ and $f_n$ are graph maps.

We have $x\in M_n$ if and only if the following hold:
\begin{enumerate}[(i)]
\item $n+2\leq x_1\leq n+7$
\item $n+2\leq x_i\leq n+i+4$ for $i=2,\ldots, n-1$
\item $0\leq x_n\leq n+2$.
\end{enumerate}
Thus $x\in \partial M_n$ if and only if (i)--(iii) hold with at least one of these inequalities being an equality. Note that the graphs $B_{n,k}\cap \partial M_n$ partition $\partial M_n$. Suppose that $x\in B_{n,k}\cap \partial M_n$. We have $x_n=k$ and $x_i^*=x_i-(n+2)$ for $1<i<n$, so
\[\gamma_n(x-c)=\gamma_n(x_1-(n+2),x_2^*,\ldots,x_{n-1}^*,k).\]
If $x_1 = n+2$, then
\[\gamma_n(x-c) = \gamma_n(0,x_2^*,\ldots,x_{n-1}^*,k)=f_n(x)\]
by definition. If $x_1=n+7$, then
\[\gamma_n(x-c) = \gamma_n(5,x_2^*,\ldots,x_{n-1}^*,k)=\gamma_n(0,x_2^*,\ldots,x_{n-1}^*,k)=f_n(x),\]
where we have used \eqref{eq:gflat05}. If $x_i = n+2$ for some $1<i<n$, then $x_i^*=0$, so
\begin{align*}
\gamma_n(x-c) &= \gamma_n(x_1-(n+2),x_2^*,\ldots,x_{i-1}^*,0,x_{i+1}^*,\ldots,x_{n-1}^*,k)\\
&=\gamma_n(0,\ldots,0,x_{i+1}^*,\ldots,x_{n-1}^*,k)\\
&=\gamma_n(0,x_2^*,\ldots,x_{i-1}^*,0,x_{i+1}^*,\ldots,x_{n-1}^*,x_n)\\
&=\gamma_n(0,x_2^*,\ldots,x_{n-1}^*,k)\\
&=f_n(x).
\end{align*}
where we have used \eqref{eq:gflat}. The argument is similar for $x_i=n+i+4$, using \eqref{eq:gflat2} instead. If $x_n=0$ (resp., $x_n=n+2$), then \eqref{eq:gflat} (resp., \eqref{eq:gflat2}) again implies \eqref{eq:gammaf}. Therefore \eqref{eq:gammaf} holds for all $x\in \partial M_n$, proving the claim that $g_n$ is well defined. It follows that $g_n$ is a graph map.

We now prove \eqref{eq:psigf}. Let
\[\Omega_n=\{x\in \Z^n : x+Q_n \subseteq M_n\}.\]
Since $g_n(x)=f_n(x)$ for all $x\in L_n\setminus M_n^\circ$, we have
\[\phi([g_n] - [f_n]) = \sum_{x\in \Omega_n} g_n^x - \sum_{x\in \Omega_n} f_n^x.\]
The first sum on the right hand side is easily seen to be $\phi(\gamma_n)$. We claim that every term of the second sum is a degenerate $n$-cube of $f_n$. If $x\in \Omega_n$, then $n+2\leq x_i<n+i+4$ for all $i<n$, so for any $q\in Q_n$ we have
\begin{align*}
f_n^x(0,q_2,\ldots,q_n) &= \gamma_n(0,x_2+q_2-(n+2),\ldots,x_{n-1}+q_{n-1}-(n+2),x_n+q_n)\\
&=f_n^x(1,q_2,\ldots,q_n).
\end{align*}
Hence $D_1^-f_n^x = D_1^+f_n^x$, proving the claim. It follows that
\[\phi([g_n]-[f_n]) = \phi(\gamma_n),\]
proving \eqref{eq:psigf}.
\end{proof}

\begin{remark}
Close inspection reveals that the image of $f_n$ is the set of all vertices of $G_n$ of the form $a_{i_1,\ldots,i_{n-1}}$, where we have added subscripts to the vertex $a\in G_1$. This set induces a subgraph of $G_n$ isomorphic to the graph $U_n$, where $U_1$ consists of a single vertex and $U_{n+1}=S_{n+3}U_n$ for all $n\geq 1$. Proposition \ref{prop:suscon} implies that $U_n$ is contractible for all $n$. Hence $[f_n]=0$ in $A_n(G_n)$, so in fact $\psi([g_n])=\overline{\phi(\gamma_n)}$.
\end{remark}

\begin{proof}[{Proof of Theorem \ref{thm:hursurj}}]
Let $G_1$ be any graph containing no 3- or 4-cycles. If $G_1$ contains no cycles, then $G_n$ is contractible for all $n$ by Example \ref{eg:acyclic} and Proposition \ref{prop:suscon}, so $\hc_n(G_n)$ is trivial for all $n\geq 1$ by \cite[Lemma 4.2]{barcelo2019-1}, and the theorem is immediate.

Suppose that $G_1$ contains a cycle. If $k<n$, then $\hc_k(G_n)=0$ by Proposition \ref{prop:homgen}(i), and the theorem is immediate. We prove the case $k=n$. Let $\{Z_{1,1},\ldots,Z_{1,\ell}\}$ be the set of cycles of $G_1$, considered as subgraphs of $G_1$. For $i=1,\ldots,\ell$ and $n\geq 1$, let $Z_{n+1,i} = S_{n+3}Z_{n,i}$, considered as a subgraph of $G_{n+1}$. Lemma \ref{lem:hursurj1} can be easily generalized to obtain grid graphs $J_{n,i}$ and graph maps $\gamma_{n,i}:J_{n,i}\to Z_{n,i}$ satisfying the following for all $i$ and $n\geq 1$:
\begin{enumerate}[(i)]
\item $\hc_n(Z_{n,i})=\langle \overline{\phi(\gamma_{n,i})}\rangle$
\item $\Delta(\overline{\phi(\gamma_{n+1,i})})=(-1)^n \overline{\phi(\gamma_{n,i})}$, where $\Delta:\hc_{n+1}(G_{n+1})\to \hc_n(G_n)$ is the isomorphism from Proposition \ref{prop:homgen}(ii).
\end{enumerate}
Let
\[\Upsilon_n=\{\overline{\phi(\gamma_{n,i})}:i=1,\ldots,\ell\}.\]
Example \ref{eg:homsus} and item (i) together say that $\hc_1(G_1)$ is generated by $\Upsilon_1$. Item (ii) then implies that $\hc_n(G_n)$ is generated by $\Upsilon_n$ for all $n\geq 1$. Lemma \ref{lem:hursurj1} can be easily generalized to show that the image of $\psi:A_n(G_n)\to \hc_n(G_n)$ contains $\Upsilon_n$. Hence $\psi$ is surjective.
\end{proof}
\section{Final remarks}
\label{sec:openq}

Our work leaves open several important questions. Let $G_1$ be a cycle graph of length 5, and define the graphs $G_n$ as in \eqref{eq:gndef}. Our most immediate goal is to show that the map $\psi:A_n(G_n)\to \hc_n(G_n)$ is an isomorphism for all $n$. Theorem \ref{thm:hursurj} brings us halfway there; we leave the remaining half as a conjecture.

\begin{conj}
The map $\psi:A_n(G_n)\to \hc_n(G_n)$ is injective for all $n$.
\label{conj:inj}
\end{conj}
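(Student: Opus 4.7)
The plan is to induct on $n$, with the main new ingredient being a discrete suspension homomorphism $\Sigma : A_n(G_n)\to A_{n+1}(G_{n+1})$ together with a Freudenthal-type surjectivity result for it. For the base case $n=1$, since $G_1=\Z_5$ contains no 3- or 4-cycles, Proposition \ref{prop:cwcomp} gives $A_1(G_1)\cong\pi_1(S^1)\cong\Z$; Example \ref{eg:homsus} gives $\hc_1(G_1)\cong\Z$, and a direct check shows that $\psi_1$ sends the generator of Example \ref{eg:babyfg} to a generator of $\hc_1(G_1)$, so $\psi_1$ is an isomorphism.

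For the inductive step, $\Sigma$ is defined roughly as follows. Given a representative $f:(I_m^n,\partial I_m^n)\to (G_n,v_0)$, build $\Sigma f$ on a grid graph of one higher dimension by placing the map $(x,i)\mapsto f(x)_i$ (in the notation of \eqref{eq:addsub2}) on an equatorial slab and using the cone contractions of Proposition \ref{prop:conecon} to collapse to $v_0$ on the outer boundary; this is essentially the procedure that, applied to $\gamma_n$, produces the map $g_n$ from the proof of Lemma \ref{lem:hursurj2}. A telescoping computation in the style of Lemma \ref{lem:hursurj1}(ii) then yields the compatibility
\[
\psi_{n+1}\circ\Sigma \;=\; (-1)^n\,\Delta^{-1}\circ\psi_n,
\]
where $\Delta$ is the isomorphism of Proposition \ref{prop:homgen}(ii). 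Given this, any $[h]\in\ker\psi_{n+1}$ that lies in $\mathrm{im}\,\Sigma$, say $[h]=\Sigma[f]$, satisfies $\psi_n[f]=0$, so the inductive hypothesis yields $[f]=0$, hence $[h]=0$.

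The crucial remaining claim is that $\Sigma$ is surjective. The natural strategy is the analog of the Freudenthal decomposition. Write $G_{n+1}=X\cup Y$, where $X$ and $Y$ are obtained by deleting the north and south poles; both are contractible by Proposition \ref{prop:conecon}, and $X\cap Y$ deformation retracts onto an equatorial copy of $G_n$. Given a based graph map $h:(\Z^{n+1},\partial\Z^{n+1})\to(G_{n+1},v_0)$, the goal is to based-homotope $h$ into a form whose preimages of the two poles are thickened sub-grids that can be carried into the equator by the cone contractions, after which the restriction to the equator defines a map $f:(\Z^n,\partial\Z^n)\to(G_n,v_0)$ with $\Sigma[f]=[h]$.

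The main obstacle is exactly this discrete Freudenthal-style splitting. The classical argument uses smooth transversality to split a map $S^{n+1}\to S^{n+1}$ across the equator, and neither transversality nor smoothness has any obvious discrete counterpart. What will be needed is a purely combinatorial approximation lemma: every based graph map into $G_{n+1}$ is based-homotopic to one whose pole-preimages decompose into grid ``blocks'' that a single cone contraction can push onto the equator. I expect to construct such an approximation by inducting on the coordinates of $\Z^{n+1}$ and repeatedly applying the basic based-homotopy from the proof of Proposition \ref{prop:contriv}, carefully tracking how the resulting corrections interact with the two cone contractions of $X$ and $Y$. This is where the bulk of the new technical work will lie.
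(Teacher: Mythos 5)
This statement is Conjecture \ref{conj:inj}, which the paper explicitly leaves open; there is no proof in the paper to compare against, so the only question is whether your argument closes the conjecture. It does not. Your reduction is sound as far as it goes: if a suspension homomorphism $\Sigma:A_n(G_n)\to A_{n+1}(G_{n+1})$ is well defined, surjective, and satisfies $\psi_{n+1}\circ\Sigma=(-1)^n\,\Delta^{-1}\circ\psi_n$, then injectivity of $\psi_{n+1}$ follows from injectivity of $\psi_n$, and the base case $n=1$ is handled by Theorem \ref{thm:hurmap1} since $A_1(G_1)\cong\Z$ is abelian. But the surjectivity of $\Sigma$ is precisely a discrete Freudenthal suspension theorem, and this is not a deferrable technicality: it \emph{is} the open problem. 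Section \ref{sec:openq} of the paper points out that such a statement would follow from a discrete homotopy excision theorem, which it poses as an open question, and notes that the cubical approximation theorem proposed in \cite{babson2006} --- the natural tool for the normalization you describe --- has resisted all attempts at a proof. Your own final paragraph concedes that ``the bulk of the new technical work'' lies exactly there, which is to say the proof has not been given.

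Concretely, the step with no known argument is the approximation lemma: given an arbitrary based graph map $h:(\Z^{n+1},\partial\Z^{n+1})\to(G_{n+1},v_0)$, there is no known way to based-homotope $h$ so that the preimages of the two poles become grid blocks that the cone contractions of Proposition \ref{prop:conecon} can push onto the equator. The classical proof uses transversality or simplicial approximation; discretely, the preimage of a pole can be an arbitrary induced subgraph of $\Z^{n+1}$ interlocking with the preimage of the other pole, and the local moves available to a based homotopy give no evident way to disentangle them while preserving the graph-map condition. The remaining ingredients of your plan --- well-definedness of $\Sigma$ on based homotopy classes and the sign-compatibility with $\Delta$ --- are plausible and consistent with the computations in Lemmas \ref{lem:hursurj1} and \ref{lem:hursurj2} (indeed, $\Sigma[\gamma_n]$ should essentially be $[g_{n+1}]$), but without surjectivity of $\Sigma$ the induction cannot advance past $n=1$, and the conjecture remains open.
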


The main analogy guiding our intuition is to think of the graphs $G_n$ as playing the role of the $n$-sphere $S^n$ in classical topology. An obvious comparison to make is that $S^{n+1}\approx S S^n$, where $S$ is the usual suspension functor, since this mirrors the construction of $G_n$ via the discrete suspension functor $S_t$. Our analogy is strengthened by the fact that
\[\hc_i(G_n)\cong H_i(S^n)\]
for all $i\leq n$. An important property of $S^n$, however, is $(n-1)$-connectedness. This is what allows one to apply the Hurewicz theorem and conclude that $\pi_n(S^n)\cong H_n(S^n)$. We suspect that $G_n$ is $(n-1)$-connected in a discrete sense.

\begin{conj}
If $i<n$, then $A_i(G_n)$ is trivial.
\end{conj}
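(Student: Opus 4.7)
The plan is to induct on $n$, using the suspension structure $G_n = S_{n+3} G_{n-1}$ to reduce $(n-1)$-connectedness of $G_n$ to $(n-2)$-connectedness of $G_{n-1}$. The base case $n = 1$ is vacuous. For the inductive step, assume that $A_j(G_{n-1}) = 0$ for all $j < n-1$, and let $f \colon (\Z^i,\partial\Z^i) \to (G_n, v_0)$ be a graph map with $i < n$, where $v_0$ is taken to be the south pole $S$ of $G_n$. The strategy is to produce a based homotopy from $f$ to a map $f'$ whose image avoids the north pole $N$. Such an $f'$ would factor through the subgraph $X = G_n \setminus \{N\}$, which is a cone on $G_{n-1}$ and hence contractible by Proposition \ref{prop:conecon}, so Proposition \ref{prop:contriv} would give $[f'] = 0$ in $A_i(G_n)$.

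To engineer the homotopy I would mimic a Mayer--Vietoris / cellular approximation argument. Let $Y = G_n \setminus \{S\}$, so that $G_n = X \cup Y$ with both $X$ and $Y$ contractible, and observe that $X \cap Y$ deformation retracts onto the rung $G_{n-1} \times \{1\}$ exactly as in the proof of Proposition \ref{prop:homgen}. Granting a preliminary lemma that discrete deformation retractions induce isomorphisms on the groups $A_j$ (in the spirit of the analogous statement for $\hc_j$ in \cite{barcelo2019-1}), the inductive hypothesis yields $A_j(X \cap Y) \cong A_j(G_{n-1}) = 0$ for all $j < n-1$. I would then subdivide the domain $I_m^i$ into subregions whose images under $f$ lie in $X$ or in $Y$, null-homotope each subregion relative to its boundary using contractibility of $X$ and $Y$, and splice the homotopies together along the overlap using the vanishing of $A_{i-1}(X \cap Y)$ to resolve boundary discrepancies. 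The hypothesis $i < n$ enters precisely because $i - 1 < n - 1$.

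The heart of the argument would be a standalone lemma: \emph{if $G = X \cup Y$ with $X$ and $Y$ contractible subgraphs, and $A_j(X \cap Y) = 0$ for all $j < i$, then $A_i(G) = 0$.} Granted this lemma, the conjecture follows by induction. The principal obstacle is proving the lemma itself, since no discrete van Kampen or Mayer--Vietoris theorem is currently available in dimensions above $1$, so the splicing must be carried out by hand. This appears to require a discrete analog of cellular approximation: a procedure that takes an arbitrary based graph map $(I_m^i, \partial I_m^i) \to (G, v_0)$ and produces a based homotopy to a map whose image decomposes cleanly along a grid subdivision of the domain into pieces lying in $X$ or in $Y$, with compatible restrictions into $X \cap Y$ on the overlap. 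Building such a subdivision while keeping the radius of the homotopy under control and preserving boundary data is where I expect the difficulty to concentrate, and this bottleneck is essentially the same one encountered in the conjectural cubical approximation theorem discussed in \cite{babson2006}.
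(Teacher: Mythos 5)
This statement is left as an open conjecture in the paper --- there is no proof to compare against --- and your proposal does not close it either. The outline is a sensible classical strategy (it is essentially how one proves $(n-1)$-connectedness of $S^n$ from Blakers--Massey or cellular approximation, and the reductions you make are sound as far as they go: $X=G_n\setminus\{N\}$ really is the cone $C_{n+2}G_{n-1}$ and hence contractible by Proposition \ref{prop:conecon}, and $X\cap Y$ really does retract onto a copy of $G_{n-1}$). But the entire content of the conjecture is concentrated in your ``standalone lemma'': if $G=X\cup Y$ with $X$, $Y$ contractible and $A_j(X\cap Y)=0$ for $j<i$, then $A_i(G)=0$. That lemma is a discrete Mayer--Vietoris/excision statement in dimensions above $1$, and no such result is currently known; the paper explicitly identifies both a discrete homotopy excision theorem and the cubical approximation theorem of \cite{babson2006} as open problems, and poses exactly this question in its final section. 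You acknowledge the bottleneck honestly, but acknowledging it does not discharge it: the ``splicing by hand'' step is where a genuine new idea is required, and nothing in your outline supplies it.

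There is also a secondary unproven ingredient you gloss over: the claim that discrete deformation retractions induce isomorphisms on the groups $A_j$. The cited fact from \cite{barcelo2019-1} concerns the homology groups $\hc_j$, not the homotopy groups, so even this ``preliminary lemma'' would need an independent argument (it is plausible and likely provable by the techniques of Proposition \ref{prop:contriv}, but it is not in the literature you invoke). In summary: the proposal is a reasonable research plan whose main step is precisely the open problem the conjecture is meant to isolate, so it should not be regarded as a proof.
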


The usual way to prove that $S^n$ is $(n-1)$-connected is to invoke the cellular approximation theorem. A discrete analog of cellular approximation was proposed in \cite{babson2006}. An alternative proof of $(n-1)$-connectedness, such as the one in \cite[Theorem 6.4.4]{tomdieck2008}, uses the \emph{homotopy excision theorem} of Blakers and Massey.

\begin{thm}[Blakers-Massey {\cite{blakers1952}}]
Suppose that a topological space $X$ is the union of open subspaces $A$ and $B$ with nonempty intersection $C=A\cap B$. If $(A,C)$ is $m$-connected and $(B,C)$ is $n$-connected, then the map
\[\pi_k(B,C)\to \pi_k(X,A),\]
induced by the inclusion $(B,C)\to (X,A)$, is surjective if $k\leq m+n$ and bijective if $k<m+n$.
\label{thm:blakers}
\end{thm}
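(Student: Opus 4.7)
The plan is to prove this via CW approximation followed by a careful inductive cellular argument, along the lines of the standard proof in \cite[Chapter 6]{tomdieck2008}. First I would apply CW approximation to the triad $(X;A,B)$: since $(A,C)$ is $m$-connected, I can replace $A$ by a relative CW complex over $C$ with cells only in dimensions $\geq m+1$, and similarly replace $B$ by a relative CW complex over $C$ with cells only in dimensions $\geq n+1$. Pushing out along $C$ yields a CW structure on $X$ in which $A$ and $B$ correspond to subcomplexes meeting in $C$, reducing the problem to a purely cellular question.

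The core step is proving surjectivity of $\pi_k(B,C)\to \pi_k(X,A)$ in the range $k\leq m+n$. Represent an element by a map of triples $f:(I^k,\partial I^k,J^{k-1})\to (X,A,*)$, where $J^{k-1}$ is the union of all but one face of $\partial I^k$. Using cellular approximation I first make $f$ cellular, then subdivide $I^k$ so finely that each small subcube maps entirely into $A$ or entirely into $B$. The goal is to homotope $f$, through maps of triples, into a map $(I^k,\partial I^k,J^{k-1})\to (B,C,*)$. I would do this by inductively ``pushing'' subcubes whose image lies in $A\setminus C$ across into $C$ and thence into $B$, using $m$-connectedness of $(A,C)$ to kill obstructions of dimension $\leq m$ and $n$-connectedness of $(B,C)$ to kill obstructions of dimension $\leq n$. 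The bound $k\leq m+n$ is precisely what makes the cumulative obstruction vanish.

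Injectivity for $k<m+n$ is handled by the same machinery applied one dimension higher: a null-homotopy in $(X,A)$ of a map representing an element of $\pi_k(B,C)$ is deformed into a null-homotopy in $(B,C)$, and the strict inequality provides the extra dimensional slack needed to carry out the subdivision-and-pushing scheme on the homotopy cube $I^{k+1}$.

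The main obstacle is the bookkeeping in the inductive pushing step. When one uses the connectivity of $(A,C)$ to homotope a subcube from $A$ into $C$, the homotopy can interfere with neighboring subcubes already pushed into $B$; conversely, a push into $B$ can disturb adjacent cubes still awaiting treatment in $A$. The remedy is to order the deformations skeleton-by-skeleton, processing subcubes in order of dimension and arranging, at each stage, that the homotopy constructed extends compatibly along the lower-dimensional skeleton already handled. Making this precise, and verifying that the obstructions encountered always lie in homotopy groups forced to vanish by the connectivity hypotheses, is the heart of the proof; it is also what forces the sharp bound $m+n$ in the statement.
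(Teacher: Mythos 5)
First, a point of order: the paper does not prove Theorem \ref{thm:blakers} at all. It is quoted from \cite{blakers1952} purely as motivation for a conjectural discrete analog in Section \ref{sec:openq}, so there is no proof of record to compare yours against; I can only assess your outline on its own terms.

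Your outline follows the standard modern proof (as in \cite[Section 6.9]{tomdieck2008} or \cite[Theorem 4.23]{hatcher2002}): CW-approximate the triad so that $A$ is obtained from $C$ by attaching cells of dimension $\geq m+1$ and $B$ by attaching cells of dimension $\geq n+1$, then subdivide the cube $I^k$ and compress. The genuine gap is in the step you describe as ``killing obstructions.'' Connectivity of $(A,C)$ kills obstructions up to dimension $m$ and connectivity of $(B,C)$ up to dimension $n$, but a skeleton-by-skeleton obstruction argument of that shape does not by itself produce the \emph{additive} bound $k\leq m+n$; nothing in your description explains why the two connectivity ranges add rather than, say, only the smaller one being usable. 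The additivity in Blakers--Massey comes from a general-position argument about preimages, not from obstruction theory: one homotopes $f$ so that for suitably chosen points $p$ in a $\mu$-cell of $B\setminus C$ and $q$ in a $\lambda$-cell of $A\setminus C$, the preimages $f^{-1}(p)$ and $f^{-1}(q)$ lie in polyhedra of dimensions at most $k-\mu$ and $k-\lambda$ respectively, and then separates them by a function $I^{k-1}\to[0,1]$ whose graph allows the cube to be compressed off the $A$-cells. That separation is possible precisely when $(k-\mu)+(k-\lambda)<k-1$, i.e.\ $k<\lambda+\mu-1$, which with $\lambda\geq m+1$ and $\mu\geq n+1$ gives $k\leq m+n$. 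You correctly name this as ``the heart of the proof'' and then omit it, so as written the proposal is a sound strategic outline whose decisive idea---and the only place the stated bound can actually come from---is missing. The deformation-ordering bookkeeping you identify as the main obstacle is real but secondary; the preparation/general-position lemma is the step you need to supply.
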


This result is a powerful means of computing higher classical homotopy groups. In particular, it is a key ingredient in the proof of the Hurewicz theorem. We therefore expect some discrete version of Theorem \ref{thm:blakers} to appear in a proof of Conjecture \ref{conj:hur}, should it hold. At the very least, a discrete homotopy excision theorem would enable us to perform more elegant computations than are currently possible.

\begin{q}
Is there a discrete homotopy excision theorem?
\end{q}

A famous corollary of homotopy excision is the \emph{Freudenthal suspension theorem}, which states that if $X$ is $n$-connected, then the map
\[\pi_k(X)\to \pi_{k+1}(\Sigma X),\]
induced by suspension, is an isomorphism for $k<2n+1$ and surjective if $k=2n+1$. This gives, for example, a sequence of maps
\[\pi_1(S^1)\to \pi_2(S^2)\to \pi_3(S^3)\to \cdots\]
in which the first map is surjective and all subsequent maps are isomorphisms. Ideally, a discrete homotopy excision theorem would give such a sequence for the graphs $G_n$, i.e. a sequence of maps
\[A_1(G_1)\to A_2(G_2)\to A_3(G_3)\to \cdots\]
in which the first map is surjective and the rest are isomorphisms. Since $A_1(G_1)\cong \Z$ and $A_2(G_2)$ is infinite by Corollary \ref{cor:hursurj}, the first map must be an isomorphism as well. Thus it would follow that $A_n(G_n)\cong \Z$ for all $n$, confirming Conjecture \ref{conj:inj} and further reinforcing the analogy between $G_n$ and $S^n$.

\section*{Acknowledgments}

The author thanks H\'{e}l\`{e}ne Barcelo and Curtis Greene for extensive discussions and suggestions. The author also thanks Trevor Hyde for helpful comments on a draft of the paper.

\bibliographystyle{abbrv}
\bibliography{lutz-hdhog}
\end{document}